\documentclass{article}





\usepackage[final,nonatbib]{neurips_2020}

\usepackage[utf8]{inputenc} 
\usepackage[T1]{fontenc}    
\usepackage{hyperref}       
\usepackage{url}            
\usepackage{booktabs}       
\usepackage{amsfonts}       
\usepackage{amsmath}
\usepackage{amssymb}
\usepackage{amsthm}
\usepackage{cite}
\usepackage{nicefrac}       
\usepackage{microtype}      
\usepackage{enumitem}
\usepackage{mathtools,xcolor,graphicx,wrapfig}
\usepackage{multirow,multicol}	
\usepackage{wrapfig}
\usepackage{grffile,algorithm,algorithmic}
\usepackage{appendix}

\newcommand{\myparagraph}[1]{\textbf{#1}.}

\DeclareMathOperator{\diag}{diag}

 \DeclareMathOperator{\trace}{trace}

\title{A novel variational form of the Schatten-$p$ quasi-norm}
%

\author{%
  Paris Giampouras \\
  Mathematical Institute for Data Science \\
  Johns Hopkins University\\
  \texttt{parisg@jhu.edu}
   \And
  Ren\'{e} Vidal \\
   Mathematical Institute for Data Science \\
  Johns Hopkins University\\
  \texttt{rvidal@jhu.edu}  
   \And
  Athanasios Rontogiannis \\
  IAASARS\\
  National Observatory of Athens \\
   \texttt{tronto@noa.gr} \\
   \And
  Benjamin D. Haeffele\\
  Mathematical Institute for Data Science \\
  Johns Hopkins University\\
  \texttt{bhaeffele@jhu.edu}  
  }

\newcommand\smallO{
  \mathchoice
    {{\scriptstyle\mathcal{O}}}
    {{\scriptstyle\mathcal{O}}}
    {{\scriptscriptstyle\mathcal{O}}}
    {\scalebox{.7}{$\scriptscriptstyle\mathcal{O}$}}
  }
  \def\SSigma{\boldsymbol{\Sigma}}
 \def\ssigma{\boldsymbol{\sigma}}
\def\U{\mathbf{U}}
\def\V{\mathbf{V}}
\def\u{\mathbf{u}}
\def\v{\mathbf{v}}
\def\Y{\mathbf{Y}}
\def\A{\mathcal{A}}
\def\Ab{\mathbf{A}}
\def\B{\mathbf{B}}
\def\C{\mathbf{C}}
\def\st{\ \ \mathrm{s.t.} \ \ }
\def\R{\mathbf{R}}
\def\Rb{\mathbb{R}}
\def\u{\mathbf{u}}
\def\v{\mathbf{v}}
\def\X{\mathbf{X}}
\def\S{\mathcal{S}_p}
\def\K{\mathbf{K}}
\def\I{\mathbf{I}}
\def\Hu{\bar{\mathbf{H}}_{\mathbf{U}^t}}
\def\Hub{\tilde{\mathbf{H}}_{\mathbf{U}^t}}
\def\Hv{\bar{\mathbf{H}}_{\mathbf{V}^t}}
\def\Hvb{\tilde{\mathbf{H}}_{\mathbf{V}^t}}

\DeclareMathOperator*{\argmin}{arg\,min}
\def\vec{\mathrm{vec}}
\def\Q{\mathbf{Q}}
\def\W{\mathbf{W}}
\def\Y{\mathbf{Y}}
\def\A{\mathcal{A}}
\def\st{\ \ \text{s.t.} \ \ }
\def\X{\mathbf{X}}
\def\x{\mathbf{x}}
\def\w{\mathbf{w}}
\def\L{\mathcal{L}}

\newtheorem{lemma}{Lemma}
\newtheorem{theorem}{Theorem}

\newtheorem{definition}{Definition}
\newtheorem{proposition}{Proposition}

\begin{document}

\maketitle

\begin{abstract}
  The Schatten-$p$ quasi-norm with $p\in(0,1)$ has recently gained considerable attention in various low-rank matrix estimation problems offering significant benefits over relevant convex heuristics such as the nuclear norm. However, due to the nonconvexity of the Schatten-$p$ quasi-norm, minimization suffers from two major drawbacks: 1) the lack of theoretical guarantees  and 2) the high computational cost which is demanded for the minimization task even for trivial tasks such as finding stationary points. In an attempt to reduce the high computational cost induced by Schatten-$p$ quasi-norm minimization, variational forms, which are defined over smaller-size matrix factors whose product equals the original matrix, have been proposed. Here, we propose and analyze a novel {\it variational form of Schatten-$p$ quasi-norm} which, for the first time in the literature, is defined for any continuous value of $p\in(0,1]$ and decouples along the columns of the factorized matrices. The proposed form can be considered as the natural generalization of the well-known variational form of the nuclear norm to the nonconvex case i.e., for $p\in(0,1)$.  Notably, low-rankness is now imposed via a group-sparsity promoting regularizer. The resulting formulation gives way to SVD-free algorithms thus offering lower computational complexity than the one that is induced by the original definition of\\ 
the Schatten-$p$ quasi-norm. A local optimality analysis is provided which shows~that we can arrive at a local minimum of the original Schatten-$p$ quasi-norm problem by reaching a local minimum of the matrix factorization based surrogate problem. In addition, for the case of the squared Frobenius loss with linear operators obeying the restricted isometry property (RIP), a rank-one update scheme is proposed, which offers a way to escape poor local minima. Finally,  the efficiency of our approach is empirically shown on a matrix completion problem.
\end{abstract}
\section{Introduction}
Recently, nonconvex heuristics such as $\ell_p$ and $\S$ (Schatten-$p$) quasi-norms\footnote{The $\S$ quasi-norm (raised to the power $p$) is defined as $\|\X\|^p_{\S} = \sum_{i=1} \sigma^p_i(\X)$, where $\sigma_i(\X)$ is the $i$th singular value of $\X$.} with $0<p<1$ have been shown to significantly outperform their convex counterparts, i.e., $\ell_1$ and nuclear norms, in sparse/low-rank vector/matrix recovery problems, for a wide range of applications \cite{nie2012low,xie2016weighted,shang2016scalable,udell_2019}. Indeed, for applications such as recovering low-rank matrices from limited measurements the $\S$ quasi-norm for $p\in(0,1)$ is known theoretically to perform at least as well, if not better than the nuclear norm (the closest convex approximation to the rank function), provided that one can find the minimum $\S$ solution\cite{foucart2018concave}.  This result is somewhat intuitive in the sense that it is known that the $\S$ quasi-norm provides an increasingly tight approximation to the rank of a matrix as $p\rightarrow 0$; however, the price to be paid is the fact that the $\S$ quasi-norm is no longer convex for $p \in (0,1)$, which can significantly complicate efficiently solving models that employ the $\S$ quasi-norm.

Over the past few years, several works have been presented in the literature which aim to provide practical methods to approximately solve $\S$ quasi-norm minimization problems, \cite{xie2016weighted,liu2014exact,zhang2018lrr,shang2016scalable,xu2017unified}. By and large, these works typically consider problems of the form
\begin{align} 
    \min_{\X\in \Rb^{m\times n}} l(\Y,\X) + \lambda\|\X\|^p_{\S} 
    \label{eq:schatt_p_opt}
\end{align}
%
where  $\Y$ is a data matrix and $l(\Y,\X)$ is a loss function which serves as a measure of how well the adopted model fits the data. 
Clearly, due to nonconvexity, the task of finding a minimizer of (\ref{eq:schatt_p_opt}) can be quite challenging and potentially limits what can be guaranteed theoretically about solving problems with form \eqref{eq:schatt_p_opt}. Moreover, traditional iterative $\S$ quasi-norm minimization algorithms typically rely on singular value decomposition steps (SVDs) at each iteration, whose computational cost ($\mathcal{O}(\mathrm{min}(mn^2,m^2n)$ ) may be cumbersome in the high dimensional and large-scale data regime. 

With the aim to address the high computational cost of $\S$ quasi-norm minimization, variational forms of the $\S$ quasi-norm have been proposed in the literature. Such approaches first: 1) represent a given matrix $\X\in \Rb^{m\times n}$ as the product of matrices $\U\in\Rb^{m\times d}$ and $\V\in\Rb^{n\times d}$, i.e., $\X=\U\V^T$ and then 2) define a regularization term over $\U$ and $\V$ whose minimum equals the value of the $\S$ quasi-norm. For example, when $p=1$, the $\S$ quasi-norm reduces to the nuclear norm $\|\X\|_{\ast}=\sum^{\min(m,n)}_{i=1}\sigma_i(\X)$, for which several variational forms have been proposed in the literature~\cite{rennie2005fast}:
\begin{align}
\label{eq:var_nuclear}
     \|\X\|_* = \min_{d \in \mathbb{N}_+} \min_{\substack{\U,\V \\ \U \V^\top = \X}}\;\; \sum^d_{i=1} \|\u_i\|_2\|\v_i\|_2 = \min_{d \in \mathbb{N}_+} \min_{\substack{\U,\V \\ \U \V^\top = \X}}\;\; \sum^d_{i=1}\tfrac{1}{2} (\|\u_i\|^2_2 + \|\v_i\|^2_2).
\end{align}
Then, instead of working with a problem with form \eqref{eq:schatt_p_opt} one can instead consider an equivalent\footnote{Here equivalence is meant to imply that a solution for $(\U,\V)$ also gives a solution for $\X=\U\V^\top$.} problem in the $(\U,\V)$ space:
\begin{equation}
\min_{d \in \mathbb{N}_+} \min_{\U,\V} l(\Y,\U\V^T) + \lambda \sum^d_{i=1} \|\u_i\|_2\|\v_i\|_2 =  \min_{d \in \mathbb{N}_+} \min_{\U,\V} l(\Y,\U\V^T) + \lambda \sum^d_{i=1} \tfrac{1}{2}(\|\u_i\|^2_2 + \|\v_i\|^2_2)
\label{eq:obj_fac_nuc_0}
\end{equation}
Notably, although variational forms allow one to define problems in terms of $(\U,\V)$ instead of $\X$, which potentially results in significant computational advantages for large-scale data, the resulting problems 
are in general nonconvex w.r.t. $\U$ and $\V$ by construction due to the presence of the matrix product $\U \V^\top$.  However, despite this challenge, for the case of the convex nuclear  norm ($p=1$) global optimality guarantees can still be obtained for solving problems in the $(\U,\V)$ domain, such as \eqref{eq:obj_fac_nuc_0}, by exploiting properties of the variational form \cite{ben_2019}.   

Capitalizing on the merits of the variational form of the nuclear norm, different alternative definitions of the $\S$ quasi-norm have been proposed. In \cite{shang2016scalable}, a variational form of $\S$ is defined for $p=\{\frac{1}{2},\frac{2}{3}\}$ via norms defined over the matrix factors $\U$ and $\V$. Along the same lines, a generalized framework which is valid for any value of $p$ was proposed in \cite{shang2016unified}; however, these forms typically require one to compute SVDs of the $(\U,\V)$ factors to evaluate the (quasi)-norm on $\X$, which can be quite limiting computationally in the large-data setting. Recently, a new variational form of the $\S$ quasi-norm was introduced in \cite{udell_2019}, which, contrary to previous approaches of \cite{shang2016unified,shang2016tractable}, is defined over the columns of the matrix factors via group-sparsity imposing norms and are thus SVD-free. 
Nevertheless, a major shortcoming of the variational form proposed in \cite{udell_2019} is that it is only defined for discrete values of $p$ rather than for an arbitrary $p\in(0,1)$. 
As a consequence, it is not a full generalization of the variational form of the nuclear norm.
Finally, all existing approaches for defining variational forms of the $\S$ quasi-norm lack any optimality analysis or study with regard to optimizing the problem in the resulting $(\U,\V)$ space, and a significant question is whether such forms introduce additional `spurious' local minima into the optimization landscape (i.e., a local minimum might exist for $(\U,\V)$, but there is no local minimum if one is optimizing w.r.t. $\X$ at $\X=\U\V^\top$). 

\myparagraph{Paper contributions}
In this paper, we propose a novel variational form of the $\S$ quasi-norm which, similarly to \cite{udell_2019}, is defined over the columns of the matrix factors $\U$ and $\V$, but unlike \cite{udell_2019} our proposed form is valid {\it for any continuous value of $p\in(0,1)$}. Further, the proposed variational form of the $\S$ quasi-norm is a {\it direct  generalization of the celebrated variational form of the nuclear norm} \eqref{eq:var_nuclear} to the nonconvex case i.e., for $p\in(0,1)$, with a clear and intuitive link showing the generalization. In addition, by minimizing the proposed form, the resulting algorithms induce column sparsity on the factors after only a few iterations, eventually converging to matrix factors whose number of nonzero columns equals to their rank. In that sense, a \emph{rank revealing} decomposition of the matrix $\X$ is obtained directly from the optimization w.r.t. $(\U,\V)$. 

Our second contribution is to present a theoretical analysis which shows that {\it local minimizers of the factorized problem give rise to local minimizers of the original problem defined w.r.t. to $\mathbf{X}$}. Our optimality analysis makes use of nonconvex optimization theory \cite{rockafellar2009variational} and is general in the sense that it can potentially be applied to other variational forms of the $\S$ quasi-norm as well as to general concave singular value penalty functions on the condition that such functions are {\it subdifferentially regular} (see supplement). As a result of our analysis, one has the computational benefits imparted by the variational form with assurances that no additional poor local minima are introduced into the problem. 

Our third contribution is to provide a strategy for escaping from ``bad" stationary points by making use of rank-one updates for the case of the squared loss functions composed with linear operators that satisfy the restricted isometry property (RIP). In particular, when optimizing w.r.t. $(\U,\V)$ one must choose an initialization for the rank (i.e., the number of columns in $\U$ and $\V$), and with a poor initial choice (e.g., initializing with too small of an initial rank) can result in convergence to poor local minima, which can be escaped by our rank-one update strategy by growing the rank of the solution. 


In summary, the current paper goes beyond state-of-the-art by a) generalizing the ubiquitous varational form of the nuclear norm ($\S$ for $p=1$) to the nonconvex case $p\in(0,1)$ and b) providing a fruitful insight on the implications arising by  transforming the original nonconvex problem into the factorized space vis-a-vis the landscape properties of the newly formulated nonconvex objective function. 

 %

\section{Prior Art}
The variational definition of the nuclear norm ($\S$ for $p=1$) given in (\ref{eq:var_nuclear}), has been widely utilized in a variety of problems. 
%
%
%
%
%
For example, formulations similar that in \eqref{eq:obj_fac_nuc_0} have been employed for problems such as collaborative filtering \cite{rennie2005fast}, robust PCA \cite{cabral2013unifying}, etc. 
In a similar vein, a few efforts for providing alternative formulations of the $\S$ quasi-norms for $p\in(0,1)$ have recently appeared in the literature. In \cite{shang2016tractable}, the authors proposed variational definitions for the $\mathcal{S}_{\frac{1}{2}}$ quasi-norm i.e.,
\begin{align}
    \|\X\|^{\frac{1}{2}}_{\mathcal{S}_{\frac{1}{2}}} = 
    \min_{d \in \mathbb{N}_+} 
    \min_{\U\V^T=\X} \|\U\|^{\frac{1}{2}}_{\ast}\|\V\|^{\frac{1}{2}}_{\ast} = \min_{\U\V^\top= \X}\frac{\|\U\|_{\ast} + \|\V\|_{\ast}}{2}.
        \label{eq:schat_var_mat_1}
\end{align}
In \cite{shang2016unified}, generalized variational forms of the $\S$ quasi-norm for any $p\in(0,1)$ such that $\frac{1}{p}=\frac{1}{p_1}+\frac{1}{p_2}$ were derived:
\begin{align}
        \|\X\|^{p}_{\mathcal{S}_{p}} = 
         \min_{d \in \mathbb{N}_+} 
        \min_{\U\V^T=\X} \|\U\|^{p}_{\mathcal{S}_{p_1}}\|\mathbf{V}\|^{p}_{\mathcal{S}_{p_2}} = \min_{\U\V^T=\X} \frac{p_2\|\U\|^{p_1}_{\mathcal{S}_{p_1}} + p_1\|\V\|^{p_2}_{\mathcal{S}_{p_2}}}{p_1 + p_2}.
        \label{eq:schat_var_mat_2}
\end{align}
While the above variational form is general for an arbitrary value of $p \in (0,1)$, note that in the general case one still needs to compute SVDs of $\U$ and $\V$ to evaluate the variational form due to the $\S$ quasi-norms on the factors, which can be a hindrance in large-scale problems.  To alleviate this problem, the authors of \cite{udell_2019} recently proposed two different variational definitions of the $\S$ quasi-norm defined for discrete values of $p$, $p=\frac{q}{q+1}$ and $p'=\frac{2q}{q+2}$ with $q=\{1,\frac{1}{2},\frac{1}{4},\dots,\}$, as
\begin{align}
       \|\X\|^{p}_{\S} = 
        \min_{d \in \mathbb{N}_+} 
       \min_{\U \V^\top=\X} \frac{1}{(1+\frac{1}{q})\alpha^{\frac{q}{q+1}}} \sum^d_{i=1}\frac{1}{q}\|\u_i\|^q_2 + \alpha \|\v_i\|_2 
       \label{eq:var_schat_col_1}
\end{align}
and
\begin{align}
         \|\X\|^{p'}_{\mathcal{S}_{p'}} = 
        \min_{d \in \mathbb{N}_+} 
         \min_{\U \V^\top=\X}  \frac{1}{(\frac{1}{2}+\frac{1}{q})\alpha^{\frac{q}{q+2}}} \sum^d_{i=1}\frac{1}{q}\|\u_i\|^q_2 + \frac{\alpha}{2} \|\v_i\|^2_2
         \label{eq:var_schat_col_2}.
\end{align}
It should be noted that both of the above $\S$ quasi-norms are defined over the columns of the factors $\mathbf{U}$ and $\mathbf{V}$. Thus they provide potentially much simpler forms to evaluate in the $(\U,\V)$ domain due to the fact that one does not need to compute any SVDs.  Further, the forms in (\ref{eq:var_schat_col_1}) and (\ref{eq:var_schat_col_2})  can also be interpreted as providing a {\it group-sparsity} regularization on the columns of $(\U,\V)$, which sets entire columns of $(\U,\V)$ to 0. Despite these advantages, we note that existing forms are not natural generalizations of the classical result for the nuclear norm \eqref{eq:var_nuclear} and they are not valid for all continuous values $p\in(0,1]$. Moreover, they have been proposed without a theoretical analysis of local or global optimality. Next, we propose an alternative variational form that addresses all of these shortcomings.

%
%
\section{The variational form of the Schatten-$p$ quasi-norm}
In this section, we introduce a variational form of the $\S$ (Schatten-$p$) quasi-norm which is defined for any continuous value of $p\in(0,1)$. For its derivation, we invoke simple arguments of classical linear algebra for the properties of concave sums of singular values that date back to the 1970s \cite{thompson1976convex}. 
\begin{theorem}(Theorem 3, \cite{thompson1976convex})
Let $\Ab,\B,\C$ be $m\times n$ matrices with $m\geq n$ such that $\C = \Ab +\B$. For any permutation invariant, non-decreasing and concave function $F:\Rb_{+}^{2n}\cup \mathbf{0}\rightarrow \Rb$, it holds 
\begin{align}
    F(c_1,c_2,\dots,c_n,0,0,\dots,0) \leq F(a_1,a_2,\dots,a_n,b_1,b_2,\dots,b_n)
\end{align}
where $a_i,b_i,c_i$ for $i=1,2,\dots,n$ denote the singular values of matrices $\Ab,\B,\C$.
\label{the:thompson}
\end{theorem}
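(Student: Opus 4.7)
The plan is to combine Ky~Fan's singular-value inequality for matrix sums with the Schur-concavity and coordinatewise monotonicity that follow from the hypotheses on $F$. First, I would invoke the classical Ky~Fan inequality
\begin{equation*}
\sum_{i=1}^{k} c_i \;\leq\; \sum_{i=1}^{k} a_i + \sum_{i=1}^{k} b_i, \qquad k=1,\dots,n,
\end{equation*}
which is the only quantitative consequence of $\mathbf{C}=\mathbf{A}+\mathbf{B}$ that needs to enter the proof.

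Second, I would recast the claim as a comparison of two vectors in $\mathbb{R}_+^{2n}$: set $\tilde c:=(c_1,\dots,c_n,0,\dots,0)$ and let $\tilde d$ denote the decreasingly sorted concatenation $(a_1,\dots,a_n,b_1,\dots,b_n)$. Because $F$ is permutation-invariant and concave it is Schur-concave, and because it is also non-decreasing it respects coordinatewise dominance. The strategy is therefore to interpose a vector $\tilde w\in\mathbb{R}_+^{2n}$ having two properties: (i) $\tilde c$ majorizes $\tilde w$ in the classical sense, and (ii) $\tilde w \leq \tilde d$ coordinatewise. If such $\tilde w$ exists, Schur-concavity yields $F(\tilde c)\leq F(\tilde w)$ and monotonicity yields $F(\tilde w)\leq F(\tilde d)$, and the conclusion chains together.

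To produce $\tilde w$ I would take $\tilde w = P\tilde c$ for a doubly stochastic matrix $P$ chosen by a water-filling-style procedure that redistributes the $n$ nonzero entries of $\tilde c$ across the $2n$ slots of $\tilde d$ in such a way that no slot exceeds its $\tilde d$-bound. The Ky~Fan inequality at each truncation level $k$ is exactly the combinatorial condition that certifies the feasibility of this redistribution.

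The hard part will be the construction of $\tilde w$. A naive coordinatewise comparison $\tilde c\leq\tilde d$ fails in general (for instance, $\mathbf{A}=\mathbf{B}=\mathbf{I}$ produces $c_1=2$ while $\tilde d_1=1$), and the standard weak submajorization $\sum_{i=1}^{k} \tilde c_{[i]} \leq \sum_{i=1}^{k}\tilde d_{[i]}$ can fail for the same reason, so neither Schur-concavity nor monotonicity applies directly to the pair $(\tilde c,\tilde d)$. The observation that makes the construction work is that $\tilde c$ has at most $n$ nonzero entries while $\tilde d$ has up to $2n$: this ``extra room'' in $\tilde d$ is what allows the mass of $\tilde c$ to be spread across pairs of slots via a doubly stochastic map, with Ky~Fan at every level $k$ certifying that sufficient room is always available. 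The interplay between Schur-concavity (used to absorb the spreading) and monotonicity (used to absorb the remaining coordinatewise slack) is the essential point of Thompson's argument.
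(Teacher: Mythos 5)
You should first be aware that the paper offers no proof of this statement at all: it is imported verbatim as Theorem~3 of Thompson (1976), so your attempt has to stand on its own. Your architecture is the right one, and is essentially the classical route: exhibit an intermediate vector $\tilde w$ with $\tilde w\prec\tilde c$ (so that Schur-concavity of the symmetric concave $F$ gives $F(\tilde c)\le F(\tilde w)$) and $\tilde w\le\tilde d$ coordinatewise (so that monotonicity gives $F(\tilde w)\le F(\tilde d)$). By the standard decomposition of weak supermajorization, such a $\tilde w$ exists if and only if, for every $k$, the sum of the $k$ \emph{smallest} entries of $\tilde d$ dominates the sum of the $k$ smallest entries of $\tilde c$; the levels $k\le n$ are trivial because $\tilde c$ has $n$ zero entries.

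The gap is that Ky~Fan's inequality is not the certificate for this feasibility condition. Ky~Fan bounds the top-$k$ partial sums $\sum_{i\le k}c_i$ from above, whereas the ``room available'' in $\tilde d$ is governed by bottom-$k$ partial sums; unpacking the condition at level $k=n+j$ gives exactly the family of tail-sum inequalities
\begin{equation*}
\sum_{i>s+t}\sigma_i(\Ab+\B)\;\le\;\sum_{i>s}\sigma_i(\Ab)+\sum_{i>t}\sigma_i(\B),\qquad s+t=n-j,
\end{equation*}
which is a genuinely different statement. It is not formally implied by Ky~Fan: the triple of nonincreasing sequences $a=(3,0)$, $b=(1,0)$, $c=(2,2)$ satisfies every Ky~Fan inequality (with equality at $k=2$) yet violates the tail inequality at $s=1,t=0$, so no manipulation of Ky~Fan alone can certify your water-filling step. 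The repair is to replace Ky~Fan by the tail inequality, which does hold for actual singular values and has a short proof from the nuclear-norm Eckart--Young--Mirsky identity $\sum_{i>m}\sigma_i(\mathbf{M})=\min_{\rank(\X)\le m}\|\mathbf{M}-\X\|_{\ast}$: take $\X$ to be the sum of the best rank-$s$ approximation of $\Ab$ and the best rank-$t$ approximation of $\B$, and apply the triangle inequality for $\|\cdot\|_{\ast}$. With that single substitution your argument closes; note that Thompson's own development likewise rests on matrix-analytic input beyond Ky~Fan (his matrix-valued triangle inequality), not on the top-$k$ sums.
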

The proposed variational form can be immediately derived from Theorem \ref{the:thompson} as follows.
\begin{theorem}
Let $0 < p \leq 1$, $\X\in \Rb^{m\times n}$ and $\mathrm{rank}(\X) = r$ where $r \leq d \leq \mathrm{min}(m,n)$. Then: 
\begin{align}
\begin{split}
	\|\X\|_{\S}^p &=
	 \min_{d \in \mathbb{N}_+} 
	\min_{\U \in \Rb^{m \times d}, \V \in \Rb^{n \times d}} \sum_{i=1}^d \|\u_i\|_2^p \|\v_i\|_2^p \st \U\V^\top = \X \\
	&=
	 \min_{d \in \mathbb{N}_+} 
	\min_{\U \in \Rb^{m \times d}, \V \in \Rb^{n \times d}} \frac{1}{2^p} \sum_{i=1}^d (\|\u_i\|_2^2 + \|\v_i\|_2^2)^p \st \U\V^\top = \X
	\end{split}
     \label{eq:theorem_1}
    \end{align}
    \label{the:var_schat}
    \end{theorem}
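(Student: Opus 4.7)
The plan is to establish both equalities by combining an explicit construction (for the upper bound) with an iterated application of Thompson's Theorem \ref{the:thompson} (for the lower bound), and then tie the two forms together with the AM--GM inequality.

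\textbf{Upper bound via SVD.} Let $\X = \U_X \SSigma_X \V_X^\top$ be the thin SVD with singular values $\sigma_1(\X), \dots, \sigma_r(\X)$. Set $\u_i = \sigma_i(\X)^{1/2} \U_{X,i}$ and $\v_i = \sigma_i(\X)^{1/2} \V_{X,i}$ for $i \leq r$, and zero columns for $r < i \leq d$. Then $\U\V^\top = \X$, $\|\u_i\|_2 = \|\v_i\|_2 = \sigma_i(\X)^{1/2}$, and so
\[
\sum_{i=1}^d \|\u_i\|_2^p \|\v_i\|_2^p = \sum_{i=1}^r \sigma_i(\X)^p = \|\X\|_{\S}^p.
\]
This proves that the infimum in the first form is $\leq \|\X\|_{\S}^p$.

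\textbf{Lower bound via Thompson.} For any feasible $(\U,\V)$, write $\X = \sum_{i=1}^d \u_i \v_i^\top$ and define the partial sums $\X_k = \sum_{i=1}^k \u_i \v_i^\top$. Let $F(s_1, \dots, s_{2n}) = \sum_j s_j^p$, which is permutation invariant, non-decreasing, and concave on $\Rb_+^{2n}$ since $0 < p \leq 1$. Applying Theorem \ref{the:thompson} to the decomposition $\X_k = \X_{k-1} + \u_k \v_k^\top$ (padding singular value vectors with zeros up to length $n$ as needed, which does not change the value of $F$), one obtains
\[
\sum_{j} \sigma_j(\X_k)^p \leq \sum_{j} \sigma_j(\X_{k-1})^p + \|\u_k\|_2^p \|\v_k\|_2^p,
\]
using that $\u_k \v_k^\top$ is rank-one with top singular value $\|\u_k\|_2 \|\v_k\|_2$. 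Iterating from $k=1$ to $d$ and starting with $\X_0 = \mathbf{0}$ yields
\[
\|\X\|_{\S}^p = \sum_{j} \sigma_j(\X_d)^p \leq \sum_{i=1}^d \|\u_i\|_2^p \|\v_i\|_2^p,
\]
which combined with the upper bound establishes the first equality.

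\textbf{Second equality via AM--GM.} For any $(\U,\V)$, AM--GM gives $\|\u_i\|_2 \|\v_i\|_2 \leq \tfrac{1}{2}(\|\u_i\|_2^2 + \|\v_i\|_2^2)$; raising to the power $p$ (which preserves the inequality since $p > 0$) and summing yields
\[
\sum_{i=1}^d \|\u_i\|_2^p \|\v_i\|_2^p \leq \frac{1}{2^p}\sum_{i=1}^d (\|\u_i\|_2^2 + \|\v_i\|_2^2)^p.
\]
Hence the infimum of the right-hand side over feasible $(\U,\V)$ is at least $\|\X\|_{\S}^p$. To see the reverse, note that the SVD-based factorization constructed above satisfies $\|\u_i\|_2 = \|\v_i\|_2$, so AM--GM holds with equality, and the right-hand side also equals $\|\X\|_{\S}^p$.

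\textbf{Main obstacle.} The only nontrivial step is the lower bound, and the key point is verifying that $F(s) = \sum_j s_j^p$ meets all hypotheses of Thompson's theorem simultaneously (in particular concavity, which requires $p \leq 1$) and handling the dimension-matching / zero-padding so that the theorem applies to the iteratively growing matrices $\X_k$. Everything else reduces to the explicit SVD construction and a one-line AM--GM argument.
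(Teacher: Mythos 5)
Your proof is correct and follows essentially the same route as the paper: both use Thompson's theorem with $F(x)=\sum_i |x_i|^p$ applied iteratively to a rank-one-plus-remainder decomposition of $\U\V^\top$ for the lower bound, the SVD-based factorization $\u_i=\sigma_i(\X)^{1/2}\U_{X,i}$, $\v_i=\sigma_i(\X)^{1/2}\V_{X,i}$ for achievability, and AM--GM for the second form. The only difference is that you accumulate partial sums $\X_k=\X_{k-1}+\u_k\v_k^\top$ forward while the paper peels one rank-one term off at a time, which is an immaterial reordering of the same induction.
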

\begin{proof}
Let us define $F:\mathbb{R}^{2n}_+\rightarrow \mathbb{R}$ as $F(x_1,x_2,\dots,x_{2n}) = \sum^{2n}_{i=1}f(x_i)$ where $f(x_i) = |x_i|^p$ with  $p\leq 1$. Clearly, $F$ satisfies the conditions of Theorem \ref{the:thompson}, i.e., $F$ is permutation invariant, non-decreasing and concave. Now, since $\mathrm{rank}(\X) \leq d$, we can express $\X$ as the product of matrices $\U\in \Rb^{m\times d}$ and $\mathbf{V}\in \Rb^{n\times d}$, i.e., $\X=\U\V^T = \sum^d_{i=1}\u_i\v^T_i$, where $d\geq r$. Let us define $\Ab=\u_1\v^\top_1$ and $\B=\sum^d_{i=2}\u_i\v^\top_i$. Matrices $\u_i\v^T_i$ and $\B$ consist of no more than 1 and $d-1$ non-zero singular values, respectively. From Theorem 1, we get $F(\sigma_1(\X),\sigma_2(\X),\dots,\sigma_r(\X),0,\dots,0_{2n-r})\leq F(\sigma_1(\u_1\v^\top_1),0,\dots,0_{n-1},\sigma_1(\B),\sigma_2(\B),\dots,\sigma_{d-1}(\B),0,\dots,0_{n-(d-1)})$, which can be written as
\begin{align}
    \sum^r_{i=1}\sigma^p_i(\X) \leq \sigma_1^p(\u_1\v^\top_1) + \sum^{d-1}_{i=1}\sigma^p_i(\B).
\end{align}
Applying the same decomposition step followed for $\X$ to $\B$ and repeating the process for $d-2$ steps leads to the following inequality:
\begin{align}
    \sum^r_{i=1}\sigma^p_i(\X) \leq \sum^d_{i=1}\sigma_1^p(\u_i\v^\top_i).
\end{align}
Then since $\sigma_1(\u_i\v^T_i) = \|\u_i\v^T_i\|_F = \|\u_i\|_2\|\v_i\|_2$ and $\|\u_i\|_2\|\v_i\|_2\leq \frac{1}{2}(\|\u_i\|^2_2 + \|\v_i\|^2_2)$, we obtain the two forms appearing in the RHS of \eqref{eq:theorem_1}. To show that the inequality is actually achieved, and thus the minimum of the RHS of \eqref{eq:theorem_1} is equal to the LHS, observe that the minimum is always achieved for $\U=\U_{\X}\SSigma^{\frac{1}{2}}$ and $\V=\V_{\X}\SSigma^{\frac{1}{2}}$, where $\X=\U_{\X}\SSigma \V^T_{\X}$ is the singular value decomposition~of~$\X$.
\end{proof}
%
It should be noted that a general result of the same type as ours was developed recently appeared in \cite{ornhag2018bilinear} for concave singular value functions. The proof of \cite{ornhag2018bilinear} is also simple, however it was based on a more complicated result (Theorem 4.4. of \cite{uchi_2006}, instead of the simpler Theorem 1 of \cite{thompson1976convex} applied in our case. Surprisingly, classical linear algebra results from \cite{thompson1976convex,uchi_2006} appear to have been overlooked when deriving variational forms of the $\S$ quasi-norm in the machine learning literature, leading to more complicated derivations of alternative variational forms of the $\S$ quasi-norm, such as \eqref{eq:var_schat_col_1} and \eqref{eq:var_schat_col_2} in \cite{udell_2019}, which are defined only for discrete values of $p$. In contrast, our proposed variational form can be defined for any continuous value of $p\in(0,1)$. In addition, our proposed variational form is a natural generalization of the variational form of the nuclear norm for $p=1$ given in \eqref{eq:var_nuclear}. Moreover, unlike the forms in \eqref{eq:schat_var_mat_1} and \eqref{eq:schat_var_mat_2}, our form depends only on the columns of $\U$ and $\V$, which will allow more efficient optimization methods (which do not require SVD calculations at each optimization iteration) for matrix factorization problems with $\S$ quasi-norm regularization, such as \eqref{eq:schatt_p_opt}, by restating them
w.r.t the matrix factors $\mathbf{U,V}$ as follows\footnote{Note that as in \eqref{eq:var_nuclear} all variational forms allow the number of columns in $(\U,\V)$ to be arbitrary, but we will omit this for notational simplicity. }:
\begin{align}
\begin{split}
     \min_{\X} l(\Y,\X) +\lambda \|\X\|_{\S}^p  &=
     \min_{\U,\V} l(\Y,\U\V^T)+\lambda\sum^d_{i=1}\|\u_i\|_2^p\|\v_i\|_2^p  \\
     &= \min_{\U,\V} l(\Y,\U\V^T)+\frac{\lambda}{2^p} \sum^d_{i=1}(\|\u_i\|_2^2 + \|\v_i\|_2^2)^p.
      \label{eq:fact_schatten_p}
\end{split}
\end{align}
%
%

\section{Analysis of the Landscape of the Non-Convex Objective}
In this section, we present 
an analysis of the problem defined in (\ref{eq:fact_schatten_p}) elaborating on the conditions that ensure that a local minimum of the variationally defined $\S$ quasi-norm minimization problem, i.e., a pair $(\hat\U,\hat\V)$, gives rise to a matrix $\hat\X=\hat\U\hat\V^T$ which is a local minimum of the original $\S$ quasi-norm regularized objective function defined  in (\ref{eq:schatt_p_opt}).  This is motivated by the fact that when one changes the domain from $\X$ to $(\U,\V)$ then it is possible to introduce additional `spurious' local minima into the problem, i.e., a point which is local minima w.r.t. $(\U,\V)$ but such that $\X=\U\V^\top$ is not a local minima w.r.t. $\X$.  It has been shown that in certain settings, such as semidefinite programming problems in standard form \cite{burer2005local}, solving problems in the factorized domain will not introduce additional local minima, but as problems involving the $\S$ quasi-norm with $p\in(0,1)$ are inherently non-convex even in the $\X$ domain, it is unclear whether such results can be generalized to this setting.  Here we provide a positive result and show that  under mild conditions local minima w.r.t. $(\U,\V)$ will correspond to local minima w.r.t. $\X$.  In addition, we also consider a special case of the general problem in \eqref{eq:schatt_p_opt} where the loss is chosen to be the squared Frobenius composed with RIP linear operators.  In this setting we propose a rank-one update strategy to escape poor stationary points and local minima  by growing the rank of the factorization.
%
\subsection{Properties of  Local Minima in Factorized Schatten-$p$ Norm Regularized Problems}
 The general idea that motivates this section is similar to what has been used to study similar variational forms defined for other general norms on $\X$ \cite{ben_2019, Schwab:SIIMS19}. Concretely, the key approach employed in this prior work is to relate the local minima of {\it nonconvex} objective functions that are defined  w.r.t.  matrices $\mathbf{U}$ and $\V$ to global minima of {\it convex} objective functions over  $\X$ such that $\X=\U\V^T$. However,  it is worth emphasizing that since in our case we focus on $\S$ quasi-norms for values of $p\in(0,1)$,  both the  problems w.r.t. to $\X$ and $\U,\V$ are nonconvex. Evidently, this presents considerable challenges in deriving conditions for global optimality w.r.t. either $\X$ or $(\U,\V)$. Moreover, finding even a local minimum of a nonconvex problem can be NP-hard in general.  Interestingly, as we show in Theorem \ref{the:main}, 
 when certain conditions are satisfied we ensure that no decreasing directions of the objective function w.r.t. $\X$ exist, thus showing that we have arrived at a local minimum of \eqref{eq:fact_schatten_p} and (\ref{eq:schatt_p_opt}). 
 
The first step in establishing optimality properties of the $\S$ quasi-norm regularized problem given in \eqref{eq:schatt_p_opt} is to derive the subgradients of the $\S$ quasi-norm. Note that conventional definitions of subgradients  are not valid in the case of the $\S$ quasi-norm due to the lack of convexity. That said, generalized notions of subgradients (referred to as {\it regular subgradients},\cite{rockafellar2009variational}) can be utilized to account for local variations of the objective functions. 
In the following Lemma, we derive the regular subdifferential of the $\S$ quasi-norm.
 \begin{lemma}
 Let $\X = \U\SSigma \V^T$ where $\U\in\Rb^{m\times r},\SSigma\in\Rb^{r\times r}$ and $\V\in\Rb^{n\times r}$, denote the singular value decomposition of $\X\in \Rb^{m\times n}$. The regular subdifferential of $\|\X\|^p_{\S}$ is given by
    \begin{align}
    \hat{\partial} \|\X\|^p_{\S} = \{ \U\mathrm{diag}(\hat{\partial} \|(\ssigma_+(\X)\|^p_p)\V^T + \mathbf{W} :   \U^T\mathbf{W} = \mathbf{0}, \mathbf{W}\V = \mathbf{0} \}
    \end{align}
    where $\ssigma_+(\X)$ is the vector of the positive singular values of $\X$.
    \label{lemma:subdiffer}
 \end{lemma}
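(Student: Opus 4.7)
The plan is to work directly from the definition of the regular (Fréchet) subdifferential: $\mathbf{G} \in \hat{\partial}\|\X\|_{\S}^p$ iff $\|\X+t\Delta\|_{\S}^p \geq \|\X\|_{\S}^p + t\langle \mathbf{G}, \Delta\rangle + o(t)$ for every direction $\Delta$ with $\|\Delta\|_F = 1$ as $t \to 0^+$. I would complete the matrices $\U,\V$ (which have orthonormal columns) to full orthonormal bases of $\Rb^m$ and $\Rb^n$ by adjoining $\U_\perp \in \Rb^{m\times(m-r)}$ and $\V_\perp \in \Rb^{n\times(n-r)}$, and decompose both $\Delta$ and the candidate $\mathbf{G}$ into four blocks in these bases: $A=\U^\top\Delta\V$, $B=\U^\top\Delta\V_\perp$, $C=\U_\perp^\top\Delta\V$, $D=\U_\perp^\top\Delta\V_\perp$ (and analogous $G_{11},G_{12},G_{21},G_{22}$). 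The inner product $\langle\mathbf{G},\Delta\rangle$ then decouples across these four blocks, which is what allows me to isolate constraints on each block of $\mathbf{G}$ separately.

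Next, I would expand $\|\X+t\Delta\|_{\S}^p = \sum_i \sigma_i(\X+t\Delta)^p$ to leading order in $t$. Standard perturbation theory for simple, strictly positive singular values yields $\sigma_i(\X+t\Delta) = \sigma_i(\X) + t A_{ii} + O(t^2)$ for $i\leq r$. For the $n-r$ singular values that vanish at $\X$, a Schur-complement argument applied to the block representation $\bigl(\begin{smallmatrix}\SSigma+tA & tB\\ tC & tD\end{smallmatrix}\bigr)$ of $[\U,\U_\perp]^\top(\X+t\Delta)[\V,\V_\perp]$ shows that the new singular values are $t\,\sigma_j(D) + O(t^2)$, and that $B,C$ affect them only at order $t^2$. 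Raising to the $p$-th power gives
\[
\|\X+t\Delta\|_{\S}^p - \|\X\|_{\S}^p = t\sum_{i=1}^r p\sigma_i^{p-1} A_{ii} \;+\; t^p\|D\|_{\S}^p \;+\; o(t).
\]

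Substituting into the Fréchet inequality pins down $\mathbf{G}$ block by block. Along directions with $D=\mathbf{0}$ the inequality is linear in $(A,B,C)$; since it must hold for both $\Delta$ and $-\Delta$, the linear form must vanish identically, which forces $G_{11}=\mathrm{diag}(p\sigma_1^{p-1},\dots,p\sigma_r^{p-1})$ (precisely the singleton $\hat{\partial}\|\ssigma_+(\X)\|_p^p$, since $\ssigma_+(\X)$ is coordinate-wise positive), $G_{12}=\mathbf{0}$, and $G_{21}=\mathbf{0}$. Along directions with $D\neq\mathbf{0}$ the term $t^p\|D\|_{\S}^p$ dominates $t\,\langle G_{22},D\rangle$ as $t\to 0^+$ because $t^{p-1}\to +\infty$ for $p\in(0,1)$, so $G_{22}$ is left unconstrained; sufficiency is then immediate from the same expansion. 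Reassembling gives $\mathbf{G}=\U\,\mathrm{diag}(d)\V^\top+\W$ with $\W=\U_\perp G_{22}\V_\perp^\top$, and the set equality $\{\U_\perp G_{22}\V_\perp^\top:G_{22}\in\Rb^{(m-r)\times(n-r)}\}=\{\W:\U^\top\W=\mathbf{0},\,\W\V=\mathbf{0}\}$ follows from orthogonality of $[\U,\U_\perp]$ and $[\V,\V_\perp]$.

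The main obstacle is the block singular-value expansion in the second step, i.e.\ rigorously establishing that the $n-r$ emerging singular values are governed purely by $D$ to first order in $t$ (with $B,C$ contributions suppressed to $O(t^2)$); this can be handled by a Schur complement on the perturbed block matrix but the error bookkeeping requires care. A higher-level alternative is to invoke Lewis and Sendov's characterization of Fréchet subdifferentials of absolutely symmetric spectral functions and compute $\hat{\partial}\sum_i|\cdot|^p$ coordinate-wise, using that $|x|^p$ has infinite Fréchet lower slope at $x=0$ for $p<1$ (so $\hat{\partial}|0|^p=\Rb$); in that approach the unrestricted $\W$ block emerges from the non-uniqueness of the SVD bases on the null part.
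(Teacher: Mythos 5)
Your route is genuinely different from the paper's. The paper does not argue from the definition of the Fr\'echet subdifferential at all: it invokes the transfer theorem of Lewis and Sendov (Theorem 7.1 of \cite{lewis2005nonsmooth}, restated as Lemma 2 in the supplement), which reduces the matrix problem to computing $\hat{\partial}$ of the absolutely symmetric function $\x \mapsto \sum_i |x_i|^p$ coordinate-wise; the unconstrained $\W$ block then falls out of $\hat{\partial}|0|^p = (-\infty,+\infty)$ together with the non-uniqueness of the singular vectors attached to the zero singular values. That is exactly the ``higher-level alternative'' you mention in your last sentence, so you have correctly identified the paper's proof as a fallback. Your primary, first-principles route is more self-contained and makes the mechanism visible (in particular, the $t^p$ versus $t$ domination that explains why $\W$ carries no spectral-norm constraint, in contrast to the nuclear-norm case), at the cost of having to do the perturbation analysis by hand.

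That perturbation analysis is where the one genuine gap sits, and it is concentrated in the regime $p \le \tfrac{1}{2}$. Your expansion asserts that when $D=\mathbf{0}$ the increment is linear in $(A,B,C)$ up to $o(t)$, but with both $B$ and $C$ nonzero the Schur complement produces emerging singular values of order $t^2\sigma_j(C\SSigma_+^{-1}B)$, whose $p$-th powers contribute a nonnegative term of order $t^{2p}$, which is \emph{not} $o(t)$ when $p \le \tfrac{1}{2}$; the $\pm\Delta$ cancellation then no longer forces the linear form to vanish. This is repairable by testing $A$, $B$, $C$ one block at a time (with only $B$ nonzero the perturbed matrix has rank $r$ and the increment is genuinely $O(t^2)$), but as written the step does not go through. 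The same issue makes sufficiency far from ``immediate'': taking $D = C\SSigma_+^{-1}B$ kills the emerging singular values entirely while $\langle G_{22}, D\rangle \ne 0$, so one cannot simply lower-bound the tail contribution by $\|D\|_{\S}^p$ minus an $o(\|\Delta\|)$ error; a correct argument has to track the cancellation between the Schur-complement correction and $D$ jointly (the residual violation turns out to be $O(\|\Delta\|^2)$, but showing this requires exactly the careful bookkeeping you flag). If you do not want to carry that out, the clean fix is the Lewis--Sendov route, i.e.\ the paper's proof.
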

 Note that the form of the expressions derived for regular subgradients of Schatten-$p$ quasi-norm resemble the ones of subgradients of the nuclear norm, however the former a) are locally defined and b) require no constraint on the spectral norm of $\mathbf{W}$.

Next we relate local minima of the factorized problem defined over $\U,\V$  to local minima of the original $\S$ quasi-norm problem defined w.r.t. $\X$ in \eqref{eq:schatt_p_opt}.
\begin{theorem}  
    Let $(\hat\U,\hat\V$) be a local minimizer of the factorized $\S$ quasi-norm regularized objective functions defined in \eqref{eq:fact_schatten_p}, where $\hat\U = \U\SSigma^\frac{1}{2}$ and $\hat\V = \V\SSigma^\frac{1}{2}$, matrices $\U\in \Rb^{m\times m}$ and  $\V\in \Rb^{n\times m}$, consisting of $r<m\leq n$ nonzero orthonormal columns, and $\boldsymbol{\Sigma}$ is a $m\times m$ real-valued diagonal matrix. Assume matrix $\hat\X = \hat\U\hat\V^T =\U\SSigma\V^T$ with $\mathrm{rank}(\hat\X)\leq r$. $\hat\X$ is a local minimizer of the $\S$ regularized objective function defined over $\X$ in (\ref{eq:schatt_p_opt}). 
\label{the:main}
\end{theorem}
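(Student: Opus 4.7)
The plan is to transfer local minimality from the $(\U,\V)$-domain to the $\X$-domain by combining three ingredients: (i) the variational form of Theorem \ref{the:var_schat} is achieved at $(\hat\U,\hat\V)$, so that $G(\hat\U,\hat\V)=F(\hat\X)$ where $G$ denotes the factorized objective in \eqref{eq:fact_schatten_p} and $F$ denotes the original objective in \eqref{eq:schatt_p_opt}; (ii) any $\X$ in a small neighborhood of $\hat\X$ admits an SVD-type factorization $(\U',\V')$ that likewise attains the variational form, giving $G(\U',\V')=F(\X)$; (iii) this factorization can be arranged to lie in the prescribed neighborhood of $(\hat\U,\hat\V)$, at which point the local minimality of $(\hat\U,\hat\V)$ under $G$ transfers to a lower bound $F(\X)=G(\U',\V')\geq G(\hat\U,\hat\V)=F(\hat\X)$.

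To execute (i), I would use that $\hat\U=\U\SSigma^{1/2}$ and $\hat\V=\V\SSigma^{1/2}$ with $\U,\V$ having orthonormal nonzero columns: then $\|\hat\u_i\|_2\|\hat\v_i\|_2=\sigma_i$ for $i\leq r$ and zero otherwise, whence $\sum_i \|\hat\u_i\|_2^p\|\hat\v_i\|_2^p=\sum_{i=1}^r\sigma_i^p=\|\hat\X\|_{\S}^p$. For (ii), given $\X$ close to $\hat\X$, I would take the full SVD $\X=\U_\X\SSigma_\X\V_\X^\top$ (padded to $m$ columns) and define $\U'=\U_\X\SSigma_\X^{1/2}$, $\V'=\V_\X\SSigma_\X^{1/2}$; the same computation as in (i) yields $G(\U',\V')=F(\X)$. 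For (iii), I would invoke perturbation theory for the SVD: Weyl's inequality gives $|\sigma_i(\X)-\sigma_i(\hat\X)|\leq\|\X-\hat\X\|_{\mathrm{op}}$, and the spectral gap $\sigma_r(\hat\X)>0$ separating the nonzero from zero singular values lets Wedin's theorem control the corresponding singular subspaces. The top $r$ columns of $(\U',\V')$ can then be chosen (via a Procrustes rotation within any multiplicity block, and by sign choice) to converge to the nonzero columns of $(\hat\U,\hat\V)$, while the bottom $m-r$ columns have norms $\sqrt{\sigma_i(\X)}\to 0$ and therefore approach the zero columns of $(\hat\U,\hat\V)$.

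The main obstacle is the last point: individual singular vectors of $\X$ are not continuous in $\X$ when the spectrum has repeated values, and in our setting $\hat\X$ always has the $(m-r)$-fold zero eigenvalue, and could additionally have repeated positive singular values. The saving grace is that the regularizer $\sum_i\|\u_i\|_2^p\|\v_i\|_2^p$ depends only on the per-column norms, and these are invariant under orthogonal mixings of columns of equal norm (as happens within any multiplicity block of the SVD). Thus there is no loss of generality in picking the within-block orthonormal frames of $\U_\X,\V_\X$ so as to minimize distance to $\U,\V$; combined with Wedin on the subspaces and Weyl on the singular values, this yields $(\U',\V')\to(\hat\U,\hat\V)$ in Frobenius norm as $\X\to\hat\X$, which closes the argument. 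Once this continuity of the SVD-based lift is in place, the three-line inequality chain in (iii) concludes the proof; the remainder is bookkeeping. As a sanity check, I would also verify the weaker first-order statement $0\in\hat\partial F(\hat\X)$ separately by differentiating $G$ along $\hat\u_i,\hat\v_i$, eliminating $\nabla l(\Y,\hat\X)$, and matching the resulting expression against the form of $\hat\partial\|\hat\X\|_{\S}^p$ given in Lemma \ref{lemma:subdiffer}, with the residual piece playing the role of the $\mathbf{W}$-term satisfying $\U^\top\mathbf{W}=\mathbf{0}$ and $\mathbf{W}\V=\mathbf{0}$.
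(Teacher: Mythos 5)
Your steps (i) and (ii) are sound and coincide with the paper's starting point (the identity $\L_1(\hat\U,\hat\V)=\L_2(\hat\U,\hat\V)=\L(\hat\X)$ and achievability of the variational form at the SVD-based factorization). The gap is in step (iii): the lift $\X\mapsto(\U_\X\SSigma_\X^{1/2},\V_\X\SSigma_\X^{1/2})$ is \emph{not} continuous at $\hat\X$ when $\hat\X$ has a repeated nonzero singular value, and the repair you propose does not work. Concretely, take $m=n=3$, $r=2$, $\hat\X=\mathrm{diag}(1,1,0)$, $\hat\U=\hat\V=\mathrm{diag}(1,1,0)$, and perturb to $\X_\epsilon=\hat\X+\epsilon(\mathbf{e}_1\mathbf{e}_2^\top+\mathbf{e}_2\mathbf{e}_1^\top)$. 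For every $\epsilon\neq 0$ the singular values are $1+\epsilon,\,1-\epsilon,\,0$ and the top two singular vectors are $\tfrac{1}{\sqrt2}(1,\pm1,0)^\top$, so the factorization your construction produces sits at distance $\Theta(1)$ from $(\hat\U,\hat\V)$ no matter how small $\epsilon$ is. The rotation freedom you invoke exists only inside a block of \emph{exactly equal} singular values; after the perturbation the block has split into distinct values $1\pm\epsilon$, and jointly rotating the corresponding columns of $(\U',\V')$ by any $R$ that mixes them replaces the per-column squared norms $\sigma_i$ by the diagonal of $R^\top\SSigma R$, which is majorized by $\sigma$ (Schur--Horn); by strict concavity of $t\mapsto t^p$ the regularizer then strictly increases, so the identity $G(\U'',\V'')=F(\X_\epsilon)$ — which your three-line chain needs — is destroyed. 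Hence for such $\X_\epsilon$ there is no attaining factorization in the prescribed neighborhood of $(\hat\U,\hat\V)$, and the argument cannot be closed. A telling symptom is that your argument never uses first-order optimality of $(\hat\U,\hat\V)$, only that $G$ is locally bounded below there; the conclusion genuinely requires more.

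The paper's proof is instead a first-order variational argument. It writes an arbitrary perturbation $\tilde\U\tilde\V^\top$ in the frame spanned by $(\hat\U,\hat\U_\perp)$ and $(\hat\V,\hat\V_\perp)$, then uses the stationarity conditions of $(\hat\U,\hat\V)$ (Lemma \ref{lemma:stationary_cond} in the supplement) together with the structure of the regular subdifferential from Lemma \ref{lemma:subdiffer} and the duality between subderivatives and regular subgradients (which holds because $\|\cdot\|_{\S}^p$ is subdifferentially regular) to show that the subderivative of $\L$ at $\hat\X$ is unaffected by the cross-blocks $\K_2,\K_3$ and by the off-diagonal part of $\K_1$. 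Only after this reduction — to perturbations that are \emph{diagonal in the singular frame of $\hat\X$} — does it construct the explicit factorization $\U_t=[\U_\X\;\U_{\X,\perp}]\,\mathrm{diag}\bigl((\SSigma_++t\mathbf{T})^{1/2},(t\mathbf{P})^{1/2}\bigr)$, which does converge to $\hat\U$; this is exactly the class of perturbations for which your continuity argument is valid. To salvage your route you would have to either assume the nonzero singular values of $\hat\X$ are simple, or bring in the stationarity conditions to control the directions along which the singular frame rotates, at which point you have essentially reconstructed the paper's proof.
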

Theorem \ref{the:main} in fact says that we can always obtain a local minimizer of the original objective function defined over $\X$ once we reach a pair ($\hat\U,\hat\V$), which is a local minimizer of the factorized $\S$ regularized objective function, on the condition that it gives rise to a low-rank matrix $\hat\X$  (which is akin to requiring that $(\U,\V)$ is sufficiently parameterized). While the above result also requires the nonzero columns of $(\hat \U, \hat \V)$ to be orthogonal, we note that this is not a significant limitation in practice as one can always perform a single SVD of $\hat \X \! = \! \hat \U \hat \V^\top$ to find a factorization with the required form which has objective value in \eqref{eq:fact_schatten_p} less than or equal to that of $(\hat \U, \hat \V)$.


%
\subsection{Rank One Updates for Escaping Poor Local Minima}
\label{subsec:rank_one_upd}
Next we consider the special case where the loss function is the squared loss composed with a linear operator (here we show one of the two variational forms that we propose, but everything in this section applies equivalently to the other form), which gives the following form:
\begin{equation}
\label{eq:rank1obj}
\min_{\U,\V} \tfrac{1}{2} \|\Y-\A(\U\V^\top) \|_F^2 + \frac{\lambda}{2^p} \sum_{i=1}^d \left(\|\u_i\|_2^2 + \|\v_i\|_2^2\right)^p,
\end{equation}
where $\A$ is a general linear operator.  
We are then interested in whether a rank-one descent step can be found after converging to a stationary point or local minima.  That is, can we augment our $(\U,\V)$ factors by adding an additional column which will improve the objective function and allow us to escape from poor local minima or stationary points.  In particular, we want to solve the following:
\begin{equation}
\label{eq:rank1_update}
\min_{\bar \u, \bar \v}  \tfrac{1}{2} \|\Y-\A(\U\V^\top + \bar \u \bar \v^\top) \|_F^2 + \lambda \sum_{i=1}^d \left(\frac{\|\u_i\|_2^2 + \|\v_i\|_2^2}{2}\right)^p + \lambda \left(\frac{\|\bar \u\|_2^2 + \|\bar \v\|_2^2}{2}\right)^p.
\end{equation}
Note that in the $p=1$ case this is the variational form of a nuclear norm problem, and if the optimal $(\bar \u, \bar \v)$ is the all zero vectors, then this is sufficient to guarantee the global optimality of $(\U,\V)$ for the problem in \eqref{eq:rank1obj} with $p=1$ \cite{ben_2019}.  Further, for the $p=1$ case it can be shown that the solution to \eqref{eq:rank1_update} can be solved via a singular vector problem \cite{ben_2019}.  Namely, the optimal $(\bar \u, \bar \v)$ will be a scaled multiple of the largest singular vector pair of the matrix $\A^*(\Y-\A(\U\V^\top))$, where $\A^*$ denotes the adjoint operator of $\A$.

For the $p<1$ case solving \eqref{eq:rank1_update} may not be sufficient to guarantee global optimality, but this can still provide a practical solution for improving a given stationary point solution.  One difficulty, however, is that there is always a local minima around the origin for problem \eqref{eq:rank1_update} w.r.t. $(\bar \u, \bar \v)$ \cite{haeffele2017global} (i.e., for any $(\bar \u, \bar \v)$ with sufficiently small magnitude the objective in \eqref{eq:rank1_update} will be greater than for $(\bar \u,\bar \v) = (0,0)$).  As a result, we need to test if for a given direction $(\bar \u, \bar \v)$ there exists a (arbitrarily large) scaling of $(\bar \u, \bar \v)$ which reduces the objective.  Specifically, note that \eqref{eq:rank1_update} is equivalent to solving the following:
\begin{equation}
\label{eq:rank1_argmin}
\begin{split}
& \argmin_{\tau \geq 0, \u, \v}  \tfrac{1}{2} \|\Y-\A(\U\V^\top + \tau^2 \u \v^\top) \|_F^2 + \lambda \tau^{2p} \st \|\u\|_2 = 1, \ \|\v\|_2=1 \\
= & \argmin_{\tau \geq 0, \u, \v}  -\tau^2 \langle \A^*(\R), \u \v^\top \rangle + \tfrac{1}{2}\tau^4 \| \A(\u \v^\top) \|_F^2 + \lambda \tau^{2p} \st \|\u\|_2 = 1, \ \|\v\|_2=1
\end{split}
\end{equation}
where $\R = \Y - \A(\U \V^\top)$. If the optimal solution for $\tau$ is at 0, then there is no rank-1 update that can decrease the objective, while if the optimal $\tau > 0$ then a rank-1 update can reduce the objective.  Here we will consider the special case when the linear operator, $\A$, satisfies the RIP condition for rank-1 matrices with constant $\delta$, which is defined as follows:
\begin{definition}
Let $\A:\Rb^{m\times n}\rightarrow \Rb^q$ be a linear operator and assume without loss of generality that $m\leq n$. We define the restricted isometry constant $\delta \geq 0$ as the smallest number such that  
\begin{align}
    (1-\delta)\|\X\|_F\leq \|\A(\X)\| \leq (1+\delta)\|\X\|_F
\end{align}
holds for any rank-one matrix $\X$, where $\|\A(\X)\|$ denotes the spectral norm of $\A(\X)$.
\end{definition}
%
From this, we then have the following rank-one update step: 
%
\begin{proposition}
For a linear operator $\A$ which is RIP on rank-1 matrices with constant $\delta = 0$ the solution to \eqref{eq:rank1_argmin} $(\u_{opt},\v_{opt})$ is given by the largest singular vector pair of $-\A^*(\R)$.  Further, let $\mu = \frac{2-2p}{2-p} \sigma(\A^*(\R))$, where $\sigma(\cdot)$ denotes the largest singular value. Then the optimal value of $\tau$ is given as:
\begin{equation}
\tau_{opt} = \left\{ \begin{array}{cc} \sqrt{\mu} & \lambda-\mu^{1-p}\sigma(\A^*(\R)) + \tfrac{1}{2} \mu^{2-p} \leq 0 \\
0 & \mathrm{otherwise} \end{array} \right.
\end{equation}
Additionally, for a linear operator $\A$ which is RIP on rank-1 matrices with constant $\delta > 0$, then the above solution will be optimal to within $\mathcal{O}( \tfrac{1}{2} \mu^2 \delta )$ in objective value.  Further, the above rank-1 update is guaranteed to reduce the objective provided $\lambda-\mu^{1-p}\sigma(\A^*(\R)) + \tfrac{1}{2} \mu^{2-p} < -\tfrac{1}{2}\mu^2 \delta$.  
\end{proposition}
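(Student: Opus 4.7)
The plan is to reduce \eqref{eq:rank1_argmin} to a scalar problem via the $\delta=0$ rank-one RIP, carry out a one-dimensional analysis to pin down the threshold and the candidate update, and finally run a perturbation argument to handle $\delta>0$. Under $\delta=0$ rank-one RIP, $\|\A(\u\v^\top)\|^2=\|\u\v^\top\|_F^2=1$ for unit $\u,\v$, so the quartic term in \eqref{eq:rank1_argmin} collapses to $\tfrac{1}{2}\tau^4$ and decouples from $(\u,\v)$. For any fixed $\tau>0$, the only $(\u,\v)$-dependent term is $-\tau^2\langle \A^*(\R),\u\v^\top\rangle$; minimizing it over $\|\u\|=\|\v\|=1$ is equivalent to maximizing $\langle \A^*(\R),\u\v^\top\rangle$, and the standard variational characterization of the largest singular value gives the maximum $\sigma(\A^*(\R))$, attained at the top singular vector pair of $\A^*(\R)$ (equivalently, of $-\A^*(\R)$ up to sign).

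Substituting back gives the scalar problem $\min_{\tau\geq 0}\phi(\tau)$ with $\phi(\tau)=-\tau^2\sigma+\tfrac{1}{2}\tau^4+\lambda\tau^{2p}$ and $\sigma\equiv\sigma(\A^*(\R))$. Setting $s=\tau^2$, I would analyze $\psi(s)=-s\sigma+\tfrac{1}{2}s^2+\lambda s^p$ on $s\geq 0$. Since $\psi''(s)=1-p(1-p)\lambda s^{p-2}$ changes sign exactly once on $s>0$ (concave near $0$, convex for large $s$), $\psi$ admits at most one nontrivial local minimizer. To identify the threshold regularization $\lambda^\star$ above which $\tau=0$ is optimal, I would solve the system $\psi(s)=0$ and $\psi'(s)=0$ simultaneously at the boundary; eliminating $\lambda$ between the two equations yields $s(1-\tfrac{p}{2})=\sigma(1-p)$, i.e., $s=\tfrac{2-2p}{2-p}\sigma=\mu$. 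Consequently $\psi(\mu)\leq 0$ holds iff $\lambda\leq\lambda^\star$, which after multiplying through by $\mu^{-p}$ rearranges exactly to the proposition's stated condition. In this regime the step $\tau^2=\mu$ is a valid descent direction (and coincides with the actual minimizer at the boundary $\lambda=\lambda^\star$); outside it no rank-one step can beat $\tau=0$.

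For $\delta>0$, the rank-one RIP gives $|\|\A(\u\v^\top)\|^2-1|=\mathcal{O}(\delta)$ for unit $\u,\v$, and this perturbation enters the objective only through the quartic term. At $\tau^2=\mu$ that term therefore changes by at most $\tfrac{1}{2}\mu^2\delta$, which yields the $\mathcal{O}(\tfrac{1}{2}\mu^2\delta)$ approximation guarantee. Correspondingly, the sufficient-decrease condition becomes the $\delta=0$ threshold with an extra $-\tfrac{1}{2}\mu^2\delta$ safety buffer, giving the stated inequality.

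The main obstacle I expect is the identification of the clean closed form for $\mu$ in the scalar problem. The stationarity equation $s-\sigma+p\lambda s^{p-1}=0$ is transcendental with no closed-form root in $\lambda$ alone; the nice expression $\mu=\tfrac{2-2p}{2-p}\sigma$ only emerges when one simultaneously imposes stationarity \emph{and} the value condition $\psi(s)=0$. Some care is then required to argue that the sign of $\psi(\mu)$ is the correct decision criterion for every $\lambda$ (not just at the boundary, where $\mu$ really is the minimizer), and to track the RIP perturbation tightly enough to recover the sharp $\tfrac{1}{2}\mu^2\delta$ constant rather than a looser $\mathcal{O}(\delta)$ bound.
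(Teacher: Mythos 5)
Your proposal is correct and follows essentially the same route as the paper: fix $\|\u\|_2=\|\v\|_2=1$, use $\delta=0$ RIP to collapse the quartic term to $\tfrac12\tau^4$, solve the $(\u,\v)$ subproblem by the variational characterization of the top singular pair, reduce to a one-dimensional problem in $\tau$, and handle $\delta>0$ by bounding the objective perturbation by $\tfrac12\delta\tau^4$, which equals $\tfrac12\mu^2\delta$ at $\tau^2=\mu$. The only difference is how $\mu$ is extracted: you solve the tangency system $\psi(s)=0,\ \psi'(s)=0$ and eliminate $\lambda$, whereas the paper divides the descent condition $\phi(\tau)<0$ by $\tau^{2p}$ and minimizes $g(\tau)=\lambda-\tau^{2-2p}\sigma+\tfrac12\tau^{4-2p}$ directly; these are algebraically equivalent, but the paper's normalization makes $\lambda$ an additive constant, so $\tau^2=\mu$ is the global minimizer of $g$ for \emph{every} $\lambda$ and the sign test at $\mu$ is automatically the correct decision criterion -- this cleanly resolves the concern you flag at the end (in your formulation the same conclusion follows from the monotonicity of $\psi$ in $\lambda$, since $\psi_\lambda(s)=\psi_{\lambda^\star}(s)+(\lambda-\lambda^\star)s^p$ and $\min_{s>0}\psi_{\lambda^\star}=0$).
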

%

\section{Application to Schatten-$p$ Norm Regularized Matrix Completion}
In this section we apply our framework to the matrix completion problem, where the goal is to estimate the missing entries of a data matrix by leveraging inherent low-rank structures in the data. We assume that we partially observe a  noisy version of the true matrix $\mathbf{X}_{true}$ i.e., 
\begin{align}
    \mathcal{P}_{Z}(\Y)  = \mathcal{P}_{Z}(\mathbf{X}_{true} + \mathbf{E}),
\end{align}
where $Z$ contains the indexes of the known elements of $\Y$, $\mathcal{P}_{Z}$ is a projection operator on this set and $\mathbf{E}$ contains zero-mean i.i.d. Gaussian elements with variance $\sigma^2$. 

Assuming a low-rank representation of $\X_{true}$, we wish to find $\X=\U\V^T$ using our proposed variational form of the $\S$ quasi-norm by solving the following problem:
\begin{align}
    \min_{\U,\V}\;\; \frac{1}{2}\|\mathcal{P}_{Z}(\mathbf{Y - UV}^T)\|^2_F + \frac{\lambda}{2^p}\sum^d_{i=1}(\|\u_i\|^2_2 + \|\v_i\|^2_2)^p.
    \label{eq:mc_objective}
\end{align}
The minimization task w.r.t. to matrix factors $\U,\V$ does not lead to closed form updates for $\U,\V$ due to both the form of the loss function and the non-separability of the regularization term  for $p<1$. That said, a block successive upper-bound minimization strategy is employed, which makes use of local tight upper-bounds of the objective function for updating each of the matrix factors \cite{hong_bsum_2013}. The resulting algorithm does not entail singular value decomposition (SVD) steps and its computational complexity is in the order of $\mathcal{O}(|Z|d)$. Analytical details of the algorithm are provided in the supplement. 

Recall also from Theorem \ref{the:var_schat} that the proposed objective in \eqref{eq:mc_objective} is equivalent to a $\S$ quasi-norm regularized matrix completion formulation $\frac{1}{2}\|\mathcal{P}_{Z}(\Y -\X)\|^2_F + \lambda\|\X\|_{\S}^p$. Hence, generalization error bounds that have been provided for the $\S$ quasi-norm (Theorem 3 of \cite{udell_2019}), which show that the matrix completion error can potentially be decreased as $p\rightarrow 0$, also apply here. The latter, is also empirically verified in the following subsection (see Figure \ref{fig:decreasing_p})). 
\subsection{Experimental Results}

In this section we provide simulated and real data experimental results that advocate the merits of the proposed variational form of the $\S$ quasi-norm in the case of the matrix completion problem. For comparison purposes, the iterative reweighted nuclear norm (IRNN) algorithm of \cite{lu2015nonconvex}, which makes use of a reweighted strategy for minimizing various concave penalty functions over the vector of singular values of the original matrix  $\mathbf{X}$, is utilized. In our case, IRNN is set up for minimizing the $\mathcal{S}_{\frac{1}{2}}$ quasi-norm. In addition, the softImpute-ALS algorithm of \cite{hastie_2015}, which minimizes the variational form of the nuclear norm  (see eq. (\ref{eq:var_nuclear})) and the factored group sparse (FGSR) algorithm for $\S$ quasi-norm minimization of \cite{udell_2019}, are also used as baseline methods. Note that the FGSR algorithm is based on the minimization of the variational $\S$ quasi-norm form given in (\ref{eq:var_schat_col_2}) and hence is an algorithm which is closest to our approach. The regularization parameters for all algorithms are carefully selected so that they all attain their best performance. 
All experiments are conducted on a MacBook Pro with 2.6 GHz 6-Core Intel Core i7 CPU and 16GB RAM using MATLAB R2019b.
\begin{figure}[h]
\includegraphics[width=.5\linewidth,height=.32\linewidth,clip=true,trim= 10 0 31 12]{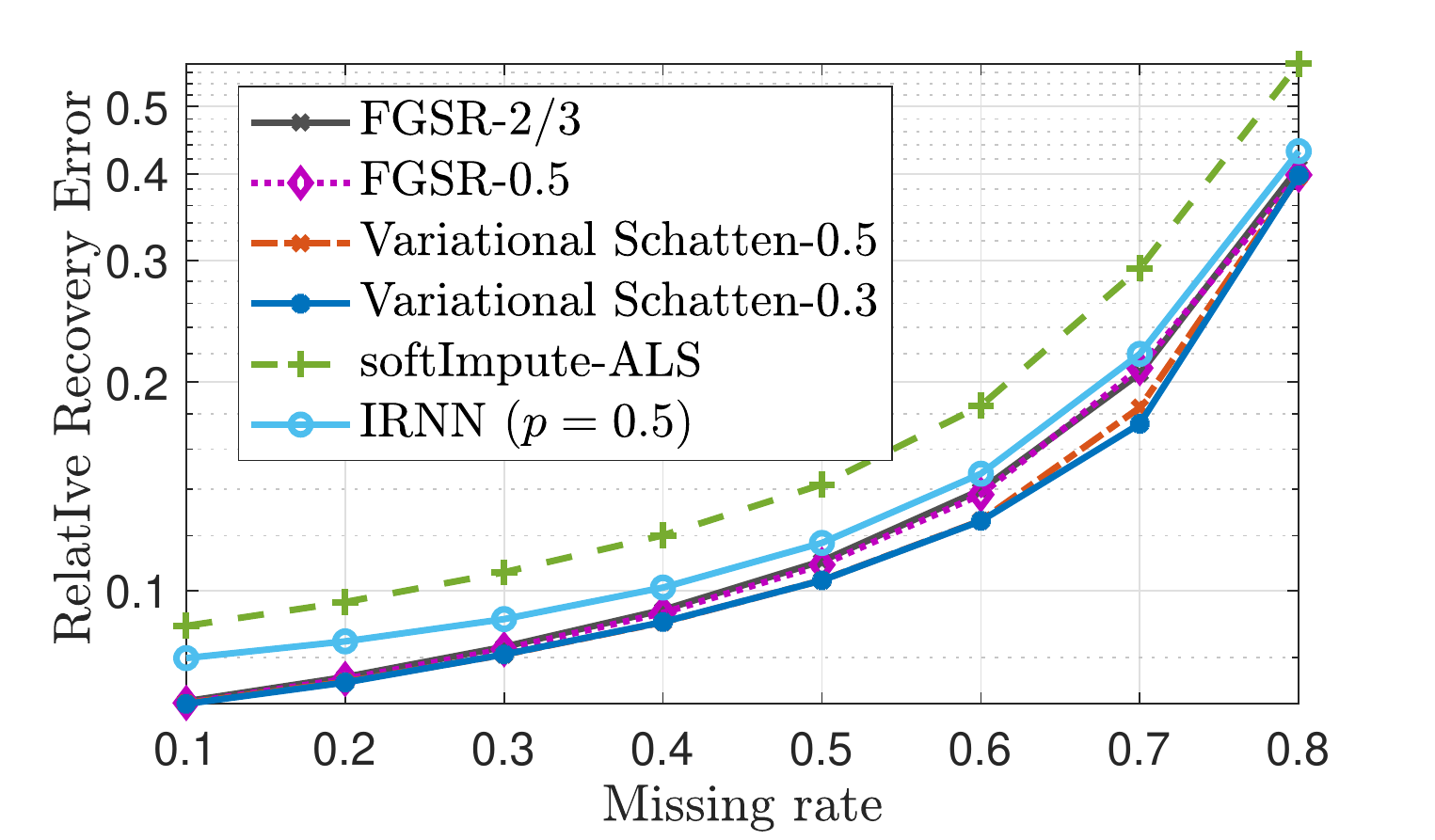}   \includegraphics[width=.5\linewidth,height=.32\linewidth,clip=true,trim= 6 0 31 12]{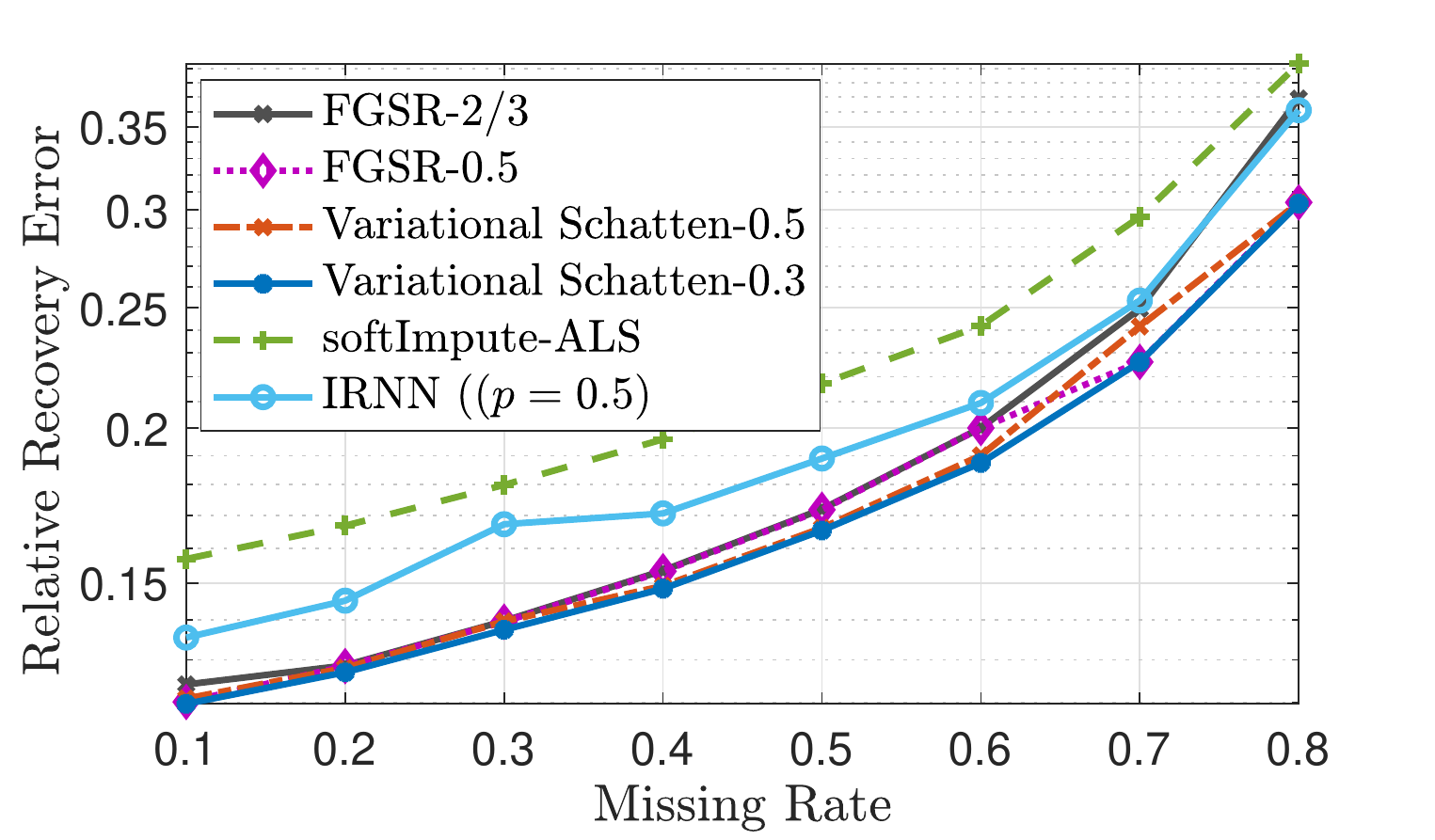}
\vspace{-0.4cm}
  \caption{ Relative Recovery Error vs Missing rate percentage for (a) SNR=15dB (left), true rank 50 and initial rank 75 (b) SNR=8dB (right), true rank 20 and initial rank 30.}
\label{fig:sim_data_exp}
\end{figure}
\vspace{-0.3cm}
\begin{wrapfigure}{r}{0.5\textwidth}
\begin{center}
    \includegraphics[width=1\linewidth,clip=true,trim= 10 3 31 0]{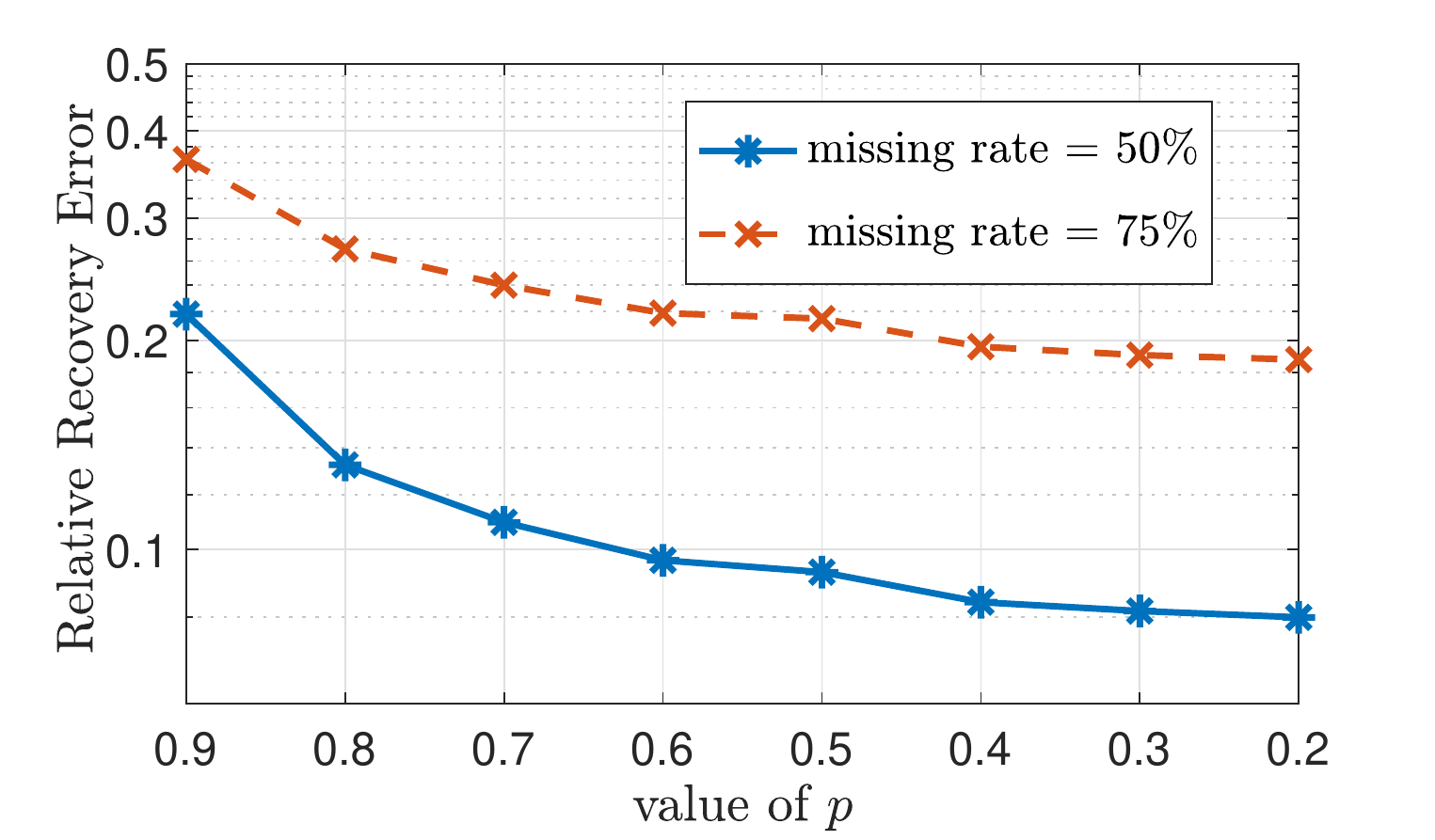}
\end{center}
    \caption{ The effect of decreasing $p$ in the performance of the proposed variational $\S$ minimization algorithms.}
    \label{fig:my_label}
    \vspace{-5mm}
    \label{fig:decreasing_p}
\end{wrapfigure}
\myparagraph{Simulated data}
We randomly generate a low-rank matrix of size $500\times 500$  and rank $r$. The low-rank matrix is contaminated by additive i.i.d. Gaussian noise with $\sigma$ selected so that we get different SNR values for the input data matrix given to the algorithms. A subset of the elements of the noisy matrix is then selected uniformly at random. For each experiment, we report the average results of 10 independent runs. The relative recovery error is used as a performance metric defined as $\mathrm{RE}=\frac{\|\mathcal{P}_{\bar{Z}}(\hat{\X} -\X_{true})\|_{F}}{\|\mathcal{P}_{\bar{Z}}(\X_{true})\|_F}$ (where $\bar{Z}$ contains the indices of the missing entries and $\hat{\X}$ denotes the estimated $\X$). In Figure \ref{fig:sim_data_exp}, we see that the proposed algorithms (named Variational Schatten-$p$ with $p=\{0.3,0.5\}$ in Figures 1(a) and 1(b)) perform equal or better than competing methods in virtually all situations. In all examined 
%
cases it can be seen that the softImpute-ALS, which uses the variational form of the nuclear norm shows poor performance as compared to the other algorithms that minimize either the original $\S$ quasi-norm or variational definitions thereof, highlighting the benefit of using the $\S$ quasi-norm with $p<1$. In Table \ref{tab:rank_one_upd}, we report the relative recovery error (average of 10 independent runs) as well as the (median) estimated final rank of the solution obtained when the algorithms are initialized with different rank initializations. 
As can be observed, the proposed algorithm is significantly more robust to errors in the rank of the initialization. Notably, the rank-one updating scheme detailed in Section \ref{subsec:rank_one_upd}, which allows the proposed algorithms to escape poor local minima, further enhances robustness to rank initialization by promoting its convergence to the true rank even in the more challenging scenario whereby the true rank is underestimated at initialization. 
%
%
\begin{table}[h]
  \caption{Relative recovery  error (RE) and  (median) estimated rank $\hat{r}$ vs rank initialization. True rank $r=20$, missing rate=0.4 and SNR=10dB. }
  \label{tab:rank_one_upd}
  \centering
  \begin{tabular}{lllllllllllll}
    \toprule
   \multirow{2}{*}{initial} &  \multicolumn{4}{c}{Variational Schatten-$0.5$}  & \multicolumn{4}{c}{Variational Schatten-$0.3$}     &   \multicolumn{2}{c}{\multirow{4}{*}{FGSR-$2/3$}} &\multicolumn{2}{c}{\multirow{4}{*}{FGSR-$0.5$}} \\ 
     \cmidrule(r){2-5}  \cmidrule(r){6-9} 
   rank& \multicolumn{4}{c}{rank-one updates}   &    \multicolumn{4}{c}{rank-one updates}&  & & &   \\
    \cmidrule(r){2-5} \cmidrule(r){6-9}
   &  \multicolumn{2}{c}{yes}   &   \multicolumn{2}{c}{no} & \multicolumn{2}{c}{yes}& \multicolumn{2}{c}{no}&  & & & \\
   \cmidrule(r){2-3} \cmidrule(r){4-5}\cmidrule(r){6-7} \cmidrule(r){8-9}
   & RE & $\hat{r}$& RE & $\hat{r}$& RE & $\hat{r}$& RE & $\hat{r}$& RE & $\hat{r}$& RE & $\hat{r}$ \\
    \midrule
   $0.5r$ & 0.2241 &20 & 0.5260 & 10 &  0.2425 &18& 0.5263 &10  &  0.5259&10 & 0.5259 &10\\
    $0.75r$     & 0.1450 & 20 &  0.3253 &10& 0.1699 &19  &0.3254 &10  & 0.3267&10 & 0.3253&10 \\
    $r$  &     0.1228  & 20   &   0.1228 &20 &0.1225 &20&    0.1225 & 20&  0.1320  &  20&0.1230 &20\\
    $1.25r$   & 0.1231  &20&  0.1231 & 20   & 0.1219 &20    & 0.1219 & 20  &  0.1320 &20 &  0.1227 &20\\
    $1.5r$  &0.1231 &20  &   0.1230 & 20 & 0.1223&20  &    0.1223&20   &0.1325&20&    0.1231 & 20\\
    \bottomrule
  \end{tabular}
  \vspace{-5mm}
\end{table}

\myparagraph{Real data}
We next test the algorithms on the MovieLens-100K dataset, \cite{movielens} which contains 100,000 ratings (integer values from 1 to 5) for 943 movies by 1682 users. We examine two cases corresponding to sampling rates of 50\% and 25\% of the known entries. For each case we initialize all matrix factorization based algorithms with ranks ranging from 10 to 50 with step-size 5. From Figure \ref{fig:real_data_exp} it can be seen that the two versions of proposed algorithm corresponding to $p=0.5$ and $p=0.3$ performs comparably to FGSR-$2/3$ and FGSR-$0.5$ while outperforming IRNN and softImpute-ALS in terms of the normalized mean average error (NMAE). The latter, is shown to be vulnerable to rank initialization unlike the remaining algorithms, which minimize different versions of the $\S$ quasi-norm.
\begin{figure}[h]
\includegraphics[width=0.5\textwidth,clip=true,trim= 10 0 31 12]{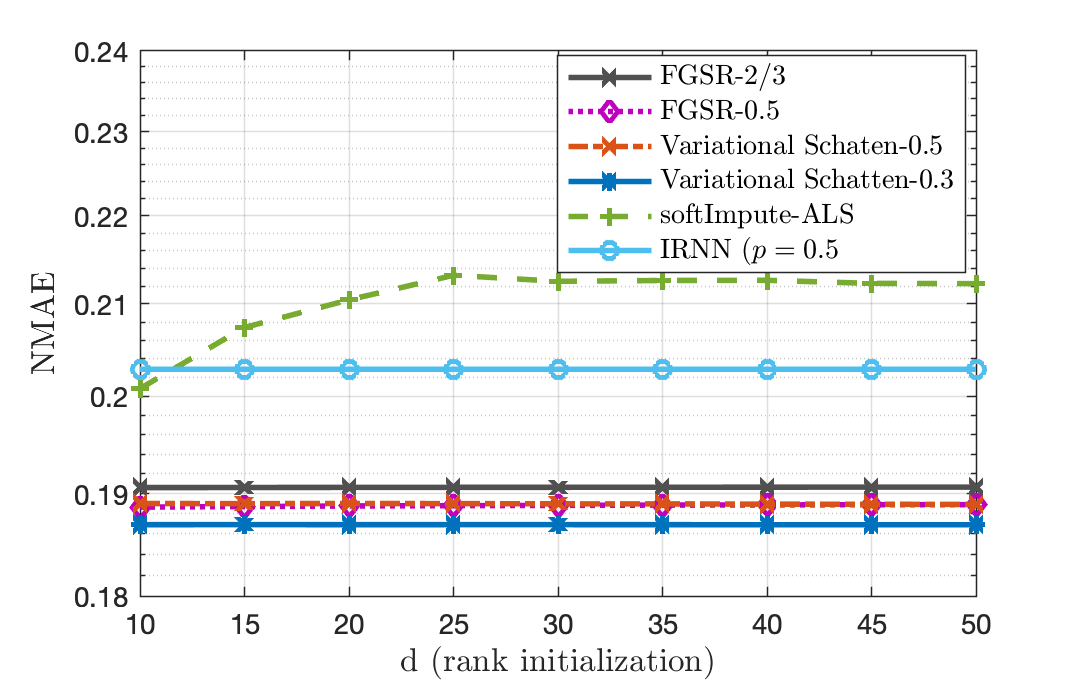}  \includegraphics[width=0.5\textwidth,clip=true,trim= 10 0 31 12]{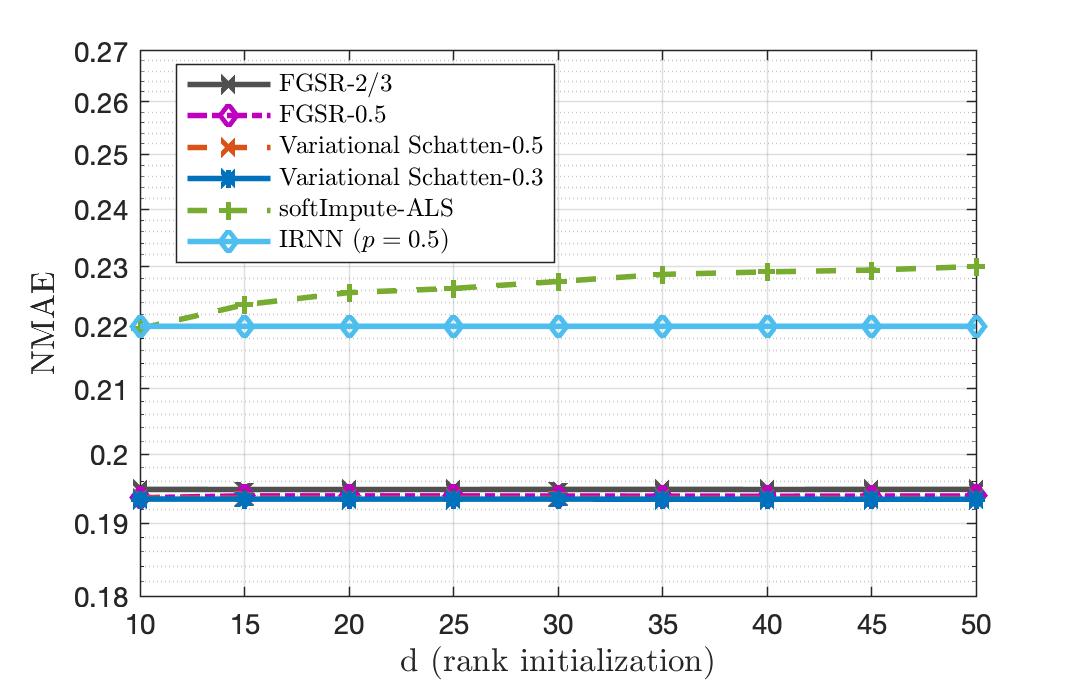}
\vspace{-0.5cm}
\caption{ NMAE vs rank initialization for fraction of observed entries of 0.5 (left) and 0.25 (right).}
\label{fig:real_data_exp}
\vspace{-0.5cm}
\end{figure}

\section{Conclusions}
In this work, a novel variational form of the Schatten-$p$ quasi-norm, which generalizes the popular variational form of the nuclear norm to the nonconvex case for $p\in(0,1)$ is introduced. A local optimality analysis is provided, which shows how local minima of the variational problem correspond to local minima of the original one. A rank-one update scheme is given for the case of the Frobenius loss functions with RIP linear operators, which allows one to escape poor local minima. The merits of the proposed algorithm w.r.t. relevant state-of-the-art approaches are demonstrated on the matrix completion problem.
\vspace{-0.3cm}
\section*{Broader Impact}
Low-rank modeling and estimation is a fundamental tool in machine learning and has numerous applications such as matrix completion and recommendation systems. As a result, understanding models for low-rank modeling and estimation is critical to understanding any potential biases and failure risks of such models. The proposed work offers new insights when it comes to the optimality properties of these problems, which might be of further interest to scientists studying relevant nonconvex theory. Moreover, our results provide theoretical insights and guidance which might be of interest to practitioners in guaranteeing and understanding the performance of their models.


\begin{ack}
This work is partially supported by
the European Union under the Horizon 2020 Marie-Skłodowska-Curie Global Fellowship program: HyPPOCRATES— H2020-MSCA-IF-2018, Grant Agreement Number: 844290,
NSF Grants 2031985, 1934931, 1704458, and Northrop Grumman Research in Applications for Learning Machines (REALM) Initiative.  
\end{ack}

    \bibliographystyle{IEEEtran}
    \bibliography{refs}

\appendix
\appendixpage

%


\setcounter{lemma}{1}

The $\S$ regularized objective function defined over $\X$ and the factorized objective functions defined over matrices $\U$ and $\V$ are given next,
\begin{align}
     \L(\X) =  l(\Y,\X) + \lambda\|\X\|^p_{\S} 
    \label{eq:schatt_p_opt}
\end{align}

\begin{align}
&  \L_1(\U,\V) =  l(\Y,\U\V^\top)+\lambda\sum^d_{i=1}\|\u_i\|_2^p\|\v_i\|_2^p  \label{eq:fact_schatten_p_1} \\
    & \L_2(\U,\V) = l(\Y,\U\V^\top)+\frac{\lambda}{2^p} \sum^d_{i=1}(\|\u_i\|_2^2 + \|\v_i\|_2^2)^p.
      \label{eq:fact_schatten_p_2}
\end{align}

\section{Technical background and proofs of Lemma 1 and Theorem 3}
First we provide definitions and the technical lemmas which are necessary for deriving: a) the regular subgradients of the Schatten-$p$ raised to $p$ quasi-norm for $p\in (0,1)$ and b) the dual relationship between subderivatives and regular subgradients. Both are key ingredients of the proof of Theorem 3.

\begin{definition}[\cite{lewis2005nonsmooth}]
A function is called as a {\it singular value function} if it is  extended real-valued, defined on $\Rb^{m\times n}$ of the form $f\circ \ssigma : \Rb^{m\times n}\rightarrow \Rb$, where $f:\Rb^q \rightarrow [-\infty,+\infty]$, $q \leq \mathrm{min}(m,n)$, is absolutely symmetric i.e., it is invariant to permutations and changes of the signs of its arguments. 
\end{definition}

Based on Definition 1, we can say that Schatten-$p$ quasi-norm of matrix $\X$ is a singular value function arising from the $\ell_p$ quasi-norm, which is absolutely symmetric and is applied on the vector of the singular values of $\X$.
  
Next we provide the following notions of general, regular and horizon subgradients, which generalize traditional subgradients of convex functions to the case of nonconvex ones. 
\begin{definition}[\cite{rockafellar2009variational}]
Let $f:\Rb^n \rightarrow \bar\Rb$ and a point $\bar{\x}$ with $f(\bar{\x})$ finite. A vector $\u\in \Rb^n$ is:
  \begin{itemize}
  \item a regular subgradient of $f$ at $\bar{\x}$ i.e., $\u\in \hat{\partial} f(\x)$, if 
  \begin{align}
      f(\x) \geq f(\bar\x) + \langle \u,\x-\bar\x\rangle + \smallO(\|\x-\bar\x\|)
\end{align}
  \item a general  subgradient of $f$ at $\bar x$ i.e., $\u\in {\partial} f(\x)$, if there exist sequences $\x^{\nu} \underset{f}{\rightarrow} \bar{\x}$ (i.e., $\x^{\nu}\rightarrow  \bar{\x}$ with $f(\x^\nu)\rightarrow f(\bar{\x})$) and $\u^{\nu} \in \hat{\partial} f(\x^{\nu})$, with $\u^{\nu}\rightarrow \u$.
  
  \item  a horizon subgradient of $f$ at $\bar\x$ i.e., $\u\in \partial^{\infty}f(\bar\x)$  for some sequence $\lambda^{\nu}\searrow 0$, there exist a sequence of $\u^{\nu}\in \hat{\partial} f(\x^{\nu})$ such that $\lambda^{\nu}\u^{\nu}\rightarrow\u$. 
  \end{itemize}
  The sets $\hat{\partial}f(\bar\x), \partial f(\bar\x),{\partial}^{\infty}f(\bar\x)$ are called regular, general and horizon subdifferential of $f$ at $\hat\x$, respectively.
  \label{def:subgrads}
\end{definition}
The above definitions implicitly assume that subgradients define hyperplanes that locally bound the function from below. Hence, they are also called as {\it lower} subgradients.  

The following lemma (Theorem 7.1 of \cite{lewis2005nonsmooth}) relates the set of general subgradients i.e., the general subdifferential, of an absolutely symmetric function $f$ with that of the corresponding singular value function.
\begin{lemma}[Theorem 7.1. of \cite{lewis2005nonsmooth}]
 Let $\X = \U\SSigma \V^\top$ where $\U\in\Rb^{m\times r},\SSigma\in\Rb^{r\times r}$ and $\V\in\Rb^{n\times r}$, denote the singular value decomposition of $\X\in \Rb^{m\times n}$. The general subdifferential of a singular value function $f\circ \ssigma$ at $\X$ is given by the formula
  \begin{align}
      \partial(f\circ \ssigma)(\X) = \U\diag(\partial f(\ssigma(\X))\V^\top
  \end{align}
where $\partial f(\ssigma(\X))$ is the general subdifferential of $f(\ssigma(\X))$. The regular and horizon subdifferential of $(f\circ \ssigma)(\X)$ i.e., $\hat{\partial}(f\circ \ssigma)(\X)$ and $\partial^{\infty}(f\circ \ssigma)(\X)$ can be similarly derived.
\label{lem:theor_lewis_1}
\end{lemma}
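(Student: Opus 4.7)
The statement is essentially the Lewis--Sendov theorem on subdifferentials of orthogonally invariant (a.k.a.\ spectral / singular value) functions, so the plan is to reproduce the classical strategy built around two ingredients: (i) the bi-orthogonal invariance $(f\circ\ssigma)(\Q_1 \X \Q_2^\top) = (f\circ\ssigma)(\X)$ for all orthogonal $\Q_1,\Q_2$, which follows from absolute symmetry of $f$ together with the invariance of singular values, and (ii) the von Neumann / Fan trace inequality $\langle \Ab, \B\rangle \leq \langle \ssigma(\Ab), \ssigma(\B)\rangle$, with equality precisely when $\Ab$ and $\B$ admit a simultaneous SVD. I would first establish the formula at the level of the regular subdifferential and then lift it to the general and horizon subdifferentials via the outer-limit characterizations in Definition~\ref{def:subgrads}.

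The inclusion $\supseteq$ at the regular level is the easy direction: given $\w \in \hat\partial f(\ssigma(\X))$, set $\mathbf{Y} = \U\diag(\w)\V^\top$. For any $\mathbf{Z}$ close to $\X$, combining the von Neumann inequality $\langle \mathbf{Y}, \mathbf{Z}\rangle \leq \langle \w, \ssigma(\mathbf{Z})\rangle$ (with sign freedom absorbed by absolute symmetry of $f$) with the $\smallO$-estimate defining $\w$ as a regular subgradient of $f$ at $\ssigma(\X)$, plus Weyl's bound $\|\ssigma(\mathbf{Z})-\ssigma(\X)\|\leq\|\mathbf{Z}-\X\|_F$, yields $(f\circ\ssigma)(\mathbf{Z}) \geq (f\circ\ssigma)(\X) + \langle \mathbf{Y}, \mathbf{Z}-\X\rangle + \smallO(\|\mathbf{Z}-\X\|)$, as required.

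The reverse inclusion $\subseteq$ at the regular level is the main obstacle, since it requires showing that any regular subgradient $\mathbf{Y}$ at $\X$ is simultaneously diagonalizable with $\X$ in the bases $(\U,\V)$. Here I would exploit invariance as a source of ``free'' test directions: plugging orthogonal perturbations $\mathbf{Z}(t) = \exp(t\mathbf{A}_1)\,\X\,\exp(-t\mathbf{A}_2)$ with skew-symmetric $\mathbf{A}_1,\mathbf{A}_2$ into the defining $\smallO$-inequality, and using that $(f\circ\ssigma)$ is constant along these curves, produces the first-order condition $\langle \mathbf{Y}, \mathbf{A}_1\X - \X\mathbf{A}_2\rangle = 0$ for every skew pair (equality rather than inequality, since one can negate $t$). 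These linear constraints kill all off-diagonal blocks of $\U^\top \mathbf{Y}\V$ across distinct singular-value levels, so $\mathbf{Y}$ admits a simultaneous SVD with $\X$. After reducing by invariance to $\X = \diag(\ssigma(\X))$, the $\smallO$-inequality restricted to diagonal perturbations descends to the defining inequality of $\w \in \hat\partial f(\ssigma(\X))$, with absolute symmetry of $f$ handling sign assignments on the diagonal of $\mathbf{Y}$.

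Lifting the formula from $\hat\partial$ to $\partial$ then uses that both sides are outer limits. For $\subseteq$, take $\X^\nu\to\X$ with $(f\circ\ssigma)(\X^\nu)\to(f\circ\ssigma)(\X)$ and $\hat\partial(f\circ\ssigma)(\X^\nu)\ni\mathbf{Y}^\nu\to\mathbf{Y}$; writing $\mathbf{Y}^\nu = \U^\nu\diag(\w^\nu)(\V^\nu)^\top$ via the regular-level formula, compactness of the orthogonal group plus outer semicontinuity of $\partial f$ let me extract a subsequence with $(\U^\nu,\V^\nu)\to(\tilde\U,\tilde\V)$ an SVD of $\X$ and $\w^\nu\to\w\in\partial f(\ssigma(\X))$, and absolute symmetry of $\partial f$ under the orthogonal freedom within each multiplicity block converts this into the originally chosen SVD $(\U,\V)$. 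The inclusion $\supseteq$ is obtained symmetrically by constructing $\X^\nu := \U\diag(\ssigma^\nu)\V^\top$ from an approximating sequence $\w^\nu\in\hat\partial f(\ssigma^\nu)$. The horizon subdifferential follows by the identical scheme with scalings $\lambda^\nu\searrow 0$. The most delicate technical step throughout is handling coalescing singular values, where consistent orthonormal bases within each multiplicity block must be chosen along the approximating sequence; this is the one point that uses full absolute symmetry rather than just permutation invariance of $f$.
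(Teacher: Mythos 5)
The paper does not actually prove this statement: it is imported wholesale as Theorem 7.1 of the cited Lewis--Sendov work, so there is no in-paper argument to compare yours against. What you have written is a reconstruction of the proof of that cited theorem, and it follows the genuine Lewis--Sendov architecture --- bi-orthogonal invariance plus the von Neumann trace inequality for the easy inclusion at the regular level, an orbit/invariance argument for the hard inclusion, and outer-limit bookkeeping to lift the formula to $\partial$ and $\partial^\infty$. As an outline this is sound, but the two places where you appeal to ``absolute symmetry'' are exactly where the real work of the theorem lives. Concretely: (i) in the $\subseteq$ direction, the orbit condition $\langle \mathbf{Y}, \mathbf{A}_1\X-\X\mathbf{A}_2\rangle=0$ only annihilates the entries of $\U^\top\mathbf{Y}\V$ coupling \emph{distinct} nonzero singular values (the relevant $2\times 2$ system has determinant $\sigma_j^2-\sigma_i^2$); within a repeated-singular-value block it merely forces that block to be symmetric, and the block of $\mathbf{Y}$ acting between $\ker\X$ and $\ker\X^\top$ is left entirely unconstrained, so you must still re-choose the SVD of $\X$ inside each multiplicity block (and independently on the kernel block) to diagonalize $\mathbf{Y}$ there before descending to the vector-level inequality; (ii) in the limit passage for $\partial$ and $\partial^\infty$, aligning the SVDs of $\X^\nu$ with a \emph{prescribed} SVD of $\X$ as singular values coalesce or hit zero is the delicate step, and it also exposes that the formula as stated with one fixed pair $(\U,\V)$ is slightly loose when $\X$ has repeated singular values --- the precise statement (which the paper itself invokes in its proof of Lemma~1) quantifies over all simultaneous SVDs of $\X$ and $\mathbf{Y}$, and also requires $\x^\nu$ to be reordered into a nonincreasing nonnegative vector before $\U\diag(\x^\nu)\V^\top$ is a valid SVD. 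None of this invalidates your strategy, but a complete proof would need these case analyses written out; citing Lewis--Sendov, as the paper does, is the economical alternative.
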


Subderivatives generalize the notion of one-sided directional derivatives  and are defined as follows
\begin{definition}
Let $f:\Rb^q\rightarrow \bar\Rb$ where $\bar\Rb=\Rb\cup \{-\infty,+\infty\}$ and a point $\bar\x$ where $f(\bar\x)$ is finite. The subderivative of $f$ at $\bar\x$ is defined as
\begin{align}
      df_{\bar\x}(\bar{\w}) = \underset{\substack{\tau\searrow 0 \\ \w\rightarrow \bar{\w}}}{\lim} \mathrm{inf}\;\frac{f(\bar\x + \tau\w) - f(\bar\x)}{\tau} 
\end{align}
\label{def:subderivs}
\end{definition}

A critical property that ensures the dual relationship between regular subgradients and subderivatives is the so-called {\it subdifferential regularity}. The following lemma can be utilized for examining whether a function is subdifferentially regular or not.
\begin{lemma}(\cite{rockafellar2009variational})
  Let a function $f: \Rb^q\rightarrow \bar\Rb$ and a point $\bar\x$ with $f(\bar\x)$ finite and $\partial f(\bar\x)\neq \emptyset$. $f$ is subdifferentially regular at $\bar\x$ if and only if $f$ is locally lower semi-continuous at $\bar\x$ with 
\begin{align}
      \partial f(\bar \x) =\hat{\partial} f(\bar\x) \;\;\;\;\; \partial^{\infty}f(\bar\x) = \hat{\partial}f(\bar \x)^{\infty}
\end{align}
where $\hat{\partial}f(\bar{\X})^{\infty}$ denotes the horizon set of the set of the general subgradients of $f$.
\label{lem:sub_reg}
\end{lemma}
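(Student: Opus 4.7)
The plan is to lift the problem to the epigraph $\mathrm{epi}(f)=\{(x,t)\in\Rb^q\times\Rb : f(x)\leq t\}$ and rephrase subdifferential regularity as Clarke regularity of $\mathrm{epi}(f)$ at $(\bar{x},f(\bar{x}))$. First I would establish the epigraphical correspondences between subgradients and normal cone elements: $v\in\hat{\partial} f(\bar{x})$ iff $(v,-1)\in\hat{N}_{\mathrm{epi}(f)}(\bar{x},f(\bar{x}))$; $v\in\partial f(\bar{x})$ iff $(v,-1)\in N_{\mathrm{epi}(f)}(\bar{x},f(\bar{x}))$; and analogously $v\in\partial^{\infty}f(\bar{x})$ iff $(v,0)\in N_{\mathrm{epi}(f)}(\bar{x},f(\bar{x}))$, with $v\in\hat{\partial} f(\bar{x})^{\infty}$ matching up to the horizon (slope-$0$) part of $\hat{N}_{\mathrm{epi}(f)}$. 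These identifications follow directly from Definition \ref{def:subgrads}, and they translate the two displayed equalities in the lemma into a single statement about equality of the regular and limiting normal cones to $\mathrm{epi}(f)$ at $(\bar{x},f(\bar{x}))$.

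For the forward direction, I would adopt the standard convention that subdifferential regularity of $f$ at $\bar{x}$ means $\mathrm{epi}(f)$ is Clarke regular at $(\bar{x},f(\bar{x}))$, i.e., $\hat{N}_{\mathrm{epi}(f)}(\bar{x},f(\bar{x}))=N_{\mathrm{epi}(f)}(\bar{x},f(\bar{x}))$. Slicing this equality along the hyperplane of slope $-1$ recovers $\hat{\partial} f(\bar{x})=\partial f(\bar{x})$, while slicing along the horizon layer gives $\hat{\partial} f(\bar{x})^{\infty}=\partial^{\infty}f(\bar{x})$. Local lower semi-continuity comes for free: Clarke regularity is only meaningful at a point where the underlying set is locally closed, and local closedness of $\mathrm{epi}(f)$ at $(\bar{x},f(\bar{x}))$ is equivalent to local lower semi-continuity of $f$ at $\bar{x}$.

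For the reverse direction, assume local l.s.c.\ of $f$ at $\bar{x}$ together with the two subgradient equalities. I would decompose any nonzero element of $N_{\mathrm{epi}(f)}(\bar{x},f(\bar{x}))$ according to whether its last coordinate is negative (after rescaling, slope $-1$) or zero (horizon direction); the l.s.c.\ hypothesis ensures that no normal vector with positive final coordinate occurs, so these two layers exhaust the limiting normal cone, and the same decomposition is available for the regular normal cone. The two hypothesized equalities then force $\hat{N}_{\mathrm{epi}(f)}(\bar{x},f(\bar{x}))=N_{\mathrm{epi}(f)}(\bar{x},f(\bar{x}))$, recovering Clarke regularity of $\mathrm{epi}(f)$ and hence subdifferential regularity of $f$ at $\bar{x}$.

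The main obstacle I expect is the horizon identification $v\in\hat{\partial} f(\bar{x})^{\infty}\Leftrightarrow (v,0)\in\hat{N}_{\mathrm{epi}(f)}(\bar{x},f(\bar{x}))$: the left-hand side is defined through an unboundedness limit of ordinary regular subgradients via rescalings $\lambda^{\nu}\searrow 0$, while the right-hand side is a purely geometric object. Reconciling the two descriptions — and more generally verifying that the limiting normal cone to the epigraph really does split into a slope-$-1$ layer and a slope-$0$ layer once $f$ is locally l.s.c.\ — is where the technical weight of the proof sits, and my argument would at this step invoke the normal-cone/subgradient correspondence machinery of Rockafellar--Wets rather than re-derive it from scratch.
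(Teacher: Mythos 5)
The paper does not actually prove this lemma --- it is stated as a bare citation of Rockafellar--Wets (it is their Definition 7.25 combined with Theorem 8.9 and Corollary 8.11) --- so there is no internal proof to compare against, and your job is really to reconstruct the textbook argument. Your epigraphical outline is exactly that argument and is sound: subdifferential regularity is by definition Clarke regularity of $\mathrm{epi}\, f$ at $(\bar\x,f(\bar\x))$ together with local closedness of the epigraph there (equivalently, local lower semicontinuity of $f$), and the slope-$(-1)$ and slope-$0$ slices of $\hat{N}_{\mathrm{epi}\, f}$ and $N_{\mathrm{epi}\, f}$ recover the four subgradient sets in the displayed equalities. Two refinements are worth recording. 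First, the absence of normal vectors with positive last coordinate does not require lower semicontinuity; it holds for any epigraph because $(0,1)$ is always a tangent direction at $(\bar\x,f(\bar\x))$. What lower semicontinuity buys is local closedness, without which the limiting normal cone and the horizon formula $\partial^{\infty}f(\bar\x)=\{v:(v,0)\in N_{\mathrm{epi}\, f}(\bar\x,f(\bar\x))\}$ are not available. Second, the identification $\hat{\partial}f(\bar\x)^{\infty}=\{v:(v,0)\in \hat{N}_{\mathrm{epi}\, f}(\bar\x,f(\bar\x))\}$ that you correctly flag as the technical crux can fail when $\hat{\partial}f(\bar\x)=\emptyset$ (the horizon cone of the empty set does not see the geometric horizon layer of the normal cone); this is precisely where the hypothesis $\partial f(\bar\x)\neq\emptyset$ earns its keep, since in either direction of the equivalence one has $\hat{\partial}f(\bar\x)=\partial f(\bar\x)\neq\emptyset$, after which the regular normal cone decomposes cleanly into the ray bundle $\{\lambda(v,-1):\lambda>0,\ v\in\hat{\partial}f(\bar\x)\}$ plus its horizon layer. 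With those points made explicit, your proof is correct and is the standard route; deferring the normal-cone/subgradient correspondence itself to Rockafellar--Wets is appropriate given that the paper cites the final statement wholesale.
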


By Definition \ref{def:subgrads} we have that a vector $\u\in \Rb^q$ is a regular subgradient of $f(x)$ at $\bar{{x}}$ i.e., ${u}\in\hat{\partial} f(\bar{{x}})$ if 
\begin{align}
    \underset{{x}\rightarrow \bar{{x}}}{\mathrm{lim}}\;\mathrm{inf}\; \frac{f({x}) - f(\bar{{x}}) - \langle {u},{x}-\bar{{x}}\rangle}{|{x}-\bar{{x}}|} \geq 0
\end{align}
Clearly the regular subgradient of $f_i(x_i)=|x_i|^p$ for $x_i\in (-\infty,0)\cup (0,+\infty)$ boils down to the gradients thereof hence the corresponding regular subdiferrential sets are singletons and coincide with the general ones. At $\bar{{x}}=0$, the regular subdiferrential of $f_i$ is the interval $(-\infty,+\infty)$. That being said, we have
\begin{align}
    \hat{\partial} f_i = \begin{dcases} - p \frac{1}{(-x_i)^{1-p}} , &    x \in (-\infty,0) \\
    (-\infty,+\infty) & x_i = 0 \\
     p \frac{1}{(x_i)^{1-p}} , &    x \in (0,\infty)
    \end{dcases}
\end{align}
The regular subdifferential of $\|\x\|^p_p$ can thus be obtained as $\partial \|\x\|^p_p = [\partial f_1(x_1), \partial f_2(x_2),\dots, \partial f_n(x_n)]$.

Lemma \ref{lem:sub_reg} is next used for proving subdifferential regularity of the $\ell_p$ quasi-norm with $p\in (0,1)$.
\begin{lemma}
The $\ell_p$ quasi-norm raised to the power of $p$ defined as $\|\x\|^p=\sum^q_{i=1}|x_i|^p$ where $\x\in \Rb^q$, is subdifferentially regular.
\end{lemma}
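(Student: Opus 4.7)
My plan is to invoke Lemma \ref{lem:sub_reg}, verifying its two ingredients: local lower semicontinuity, and the identification of both the general subdifferential with the regular one and of the horizon subdifferential with the horizon cone of the regular subdifferential. Since $f(\x)=\sum_{i=1}^q |x_i|^p$ is continuous on $\Rb^q$ (continuity of $t\mapsto |t|^p$ on $\Rb$ is immediate for $p\in(0,1)$), local lower semicontinuity holds at every point. I would also record that the function is coordinatewise separable, so all three subdifferentials factor as Cartesian products over the components, which reduces everything to showing subdifferential regularity of the scalar map $f_i(t)=|t|^p$ at each $t\in\Rb$.

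Next I would handle the two cases for a single coordinate. Away from $0$, $f_i$ is continuously differentiable with $f_i'(t)=p\,\mathrm{sign}(t)/|t|^{1-p}$, so all three subdifferentials collapse to the singleton $\{f_i'(t)\}$ and the horizon subdifferential is $\{0\}$, which equals the horizon cone of a singleton; the regularity conditions thus hold trivially at any $t\neq 0$. At $t=0$ the computation preceding the lemma gives $\hat{\partial} f_i(0)=(-\infty,+\infty)=\Rb$. To compute $\partial f_i(0)$, I would use the constant sequence $t^\nu\equiv 0$: then $f_i(t^\nu)\to f_i(0)$ and any $u\in\Rb$ lies in $\hat\partial f_i(0)$, so every $u\in\Rb$ is a general subgradient; hence $\partial f_i(0)=\Rb=\hat\partial f_i(0)$. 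The same constant-sequence trick, paired with $u^\nu=u/\lambda^\nu$ for any chosen $u\in\Rb$ and $\lambda^\nu\searrow 0$, shows $\partial^{\infty} f_i(0)=\Rb$, which agrees with $\hat\partial f_i(0)^\infty=\Rb^\infty=\Rb$.

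Putting these coordinate facts together, the separability yields $\partial f(\bar\x)=\hat\partial f(\bar\x)$ and $\partial^\infty f(\bar\x)=\hat\partial f(\bar\x)^\infty$ at every $\bar\x\in\Rb^q$. Combined with local lower semicontinuity, Lemma \ref{lem:sub_reg} delivers subdifferential regularity of $\|\cdot\|_p^p$.

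The only mildly subtle point is the behavior at coordinates equal to zero: the nonzero-coordinate sequences $t^\nu\to 0$ produce $\hat\partial f_i(t^\nu)=\{f_i'(t^\nu)\}$ whose magnitude blows up, which a priori threatens to enlarge $\partial^\infty f_i(0)$ beyond $\hat\partial f_i(0)^\infty$ or to introduce subtle behavior in $\partial f_i(0)$. The plan handles this cleanly because $\hat\partial f_i(0)=\Rb$ is already maximal, so (i) no blow-up sequence can enlarge $\partial f_i(0)$ beyond what the constant-zero sequence already produces, and (ii) $\hat\partial f_i(0)^\infty=\Rb$ absorbs any rescaled limit of the exploding gradients. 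This saturation is exactly what makes the concave penalty $|t|^p$ subdifferentially regular despite being nonsmooth at the origin, and it is the key observation I would foreground in the write-up.
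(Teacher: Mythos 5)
Your proposal is correct and follows essentially the same route as the paper's proof: reduce by separability to the scalar function $f_i(t)=|t|^p$, observe regularity away from the origin by smoothness, and at the origin use the saturation $\hat{\partial}f_i(0)=(-\infty,+\infty)$ together with the characterization in Lemma \ref{lem:sub_reg}. Your write-up is in fact somewhat more careful than the paper's, since you explicitly verify $\partial f_i(0)=\hat{\partial}f_i(0)$ and $\partial^{\infty}f_i(0)=\hat{\partial}f_i(0)^{\infty}$ via the constant-sequence argument and address the blow-up of nearby gradients, which the paper passes over with ``evidently.''
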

\begin{proof}
Let us define the function $f_i(x_i)=|x_i|^p$. $\ell_p$ quasi-norm  can be written as $\|\x\|^p_p = \sum^q_{i=1}f_i(x_i)$. Since the $\ell_p$ quasi-norm is separable, regular, general and horizon subgradient can be taken by cartesian products of the subgradients of $f_i$s, for $i=1,2,\dots,q$. Moreover, $\|\x\|^p_p$ is regular at $\x$ when $f_i$s are regular at $x_i$s for all $i$s. Hence, we focus on $f_i(x_i) = |x_i|^p$. $f_i(x_i)$ is smooth and thus regular in $\Rb-\{0\}$. At $0$, $f_i(0)$ is nondifferentiable and $\hat{\partial} f_i(0)=(-\infty,+\infty)$. Evidently, the set of regular subgradients is closed and coincides with $\partial f_i(0)$ (see Definition \ref{def:subgrads}). Moreover, for the set of horizon subgradients at $0$ we have $\partial^{\infty} f_i(0) = \{-\infty,+\infty\} \equiv \hat{\partial}f_i(0)^{\infty}$. Hence, due to Lemma \ref{lem:sub_reg} we can conclude the proof.
\end{proof}
Having shown that the $\ell_p$ quasi-norm for $p\in(0,1)$ is a subdifferentially regular, we can now show that the singular value function arising by the $\ell_p$ quasi-norm, i.e., the Schatten-$p$ quasi-norm is also subdifferentially regular.
\begin{lemma}
The Schatten-$p$ quasi-norm with $p\in(0,1)$ is a subdifferentially regular function.
\label{lem:sub_reg_schatten}
\end{lemma}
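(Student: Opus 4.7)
\textbf{Proof plan for Lemma \ref{lem:sub_reg_schatten}.}

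The plan is to verify the criterion of Lemma \ref{lem:sub_reg} for the Schatten-$p$ quasi-norm by transferring the subdifferential regularity of the $\ell_p$ quasi-norm (established in the previous lemma) through the correspondence between singular value functions and their underlying absolutely symmetric generators. Writing $\|\X\|^p_{\S} = f \circ \ssigma(\X)$ with $f(\x) = \|\x\|_p^p$ absolutely symmetric, Lemma \ref{lem:theor_lewis_1} gives the three identities
\begin{align}
\hat{\partial}(f\circ \ssigma)(\X) &= \U\diag(\hat{\partial} f(\ssigma(\X)))\V^\top, \\
\partial(f\circ \ssigma)(\X) &= \U\diag(\partial f(\ssigma(\X)))\V^\top, \\
\partial^{\infty}(f\circ \ssigma)(\X) &= \U\diag(\partial^{\infty} f(\ssigma(\X)))\V^\top,
\end{align}
so the question of equality among the three subdifferential objects for $f\circ\ssigma$ reduces to the same equalities for $f$ itself.

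First, I would invoke the previous lemma on the $\ell_p$ quasi-norm to conclude $\partial f(\ssigma(\X)) = \hat{\partial} f(\ssigma(\X))$ and $\partial^{\infty} f(\ssigma(\X)) = \hat{\partial} f(\ssigma(\X))^{\infty}$. Feeding these identities into the three Lewis formulas above produces immediately
\begin{align}
\partial(f\circ \ssigma)(\X) = \hat{\partial}(f\circ \ssigma)(\X), \qquad \partial^{\infty}(f\circ \ssigma)(\X) = \hat{\partial}(f\circ \ssigma)(\X)^{\infty},
\end{align}
where the last identification uses that taking the horizon of a set commutes with the map $\A \mapsto \U\diag(\A)\V^\top$ since this map is linear in its diagonal entries.

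Next, I would check local lower semi-continuity at an arbitrary $\bar{\X}$. This is in fact continuity: the singular value map $\ssigma:\Rb^{m\times n}\to\Rb^{\min(m,n)}_+$ is continuous, and $f(\x)=\sum_i |x_i|^p$ is continuous on $\Rb^q$, so $f\circ \ssigma$ is continuous at $\bar{\X}$ and in particular locally lower semi-continuous with finite value. Finally, since $\partial f(\ssigma(\X))$ is nonempty for every $\X$ (it always contains the coordinate-wise subgradients derived in the previous lemma), the general subdifferential $\partial(f\circ\ssigma)(\bar{\X})$ is nonempty, so all hypotheses of Lemma \ref{lem:sub_reg} are satisfied and subdifferential regularity follows.

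The only subtle step will be the horizon-subdifferential equality: one must verify that the identification $\partial^{\infty}(f\circ\ssigma)(\X) = \U\diag(\hat{\partial} f(\ssigma(\X))^{\infty})\V^\top$ coincides with the horizon of $\hat{\partial}(f\circ\ssigma)(\X)$. This is where the regularity of $f$ is essential: the horizon of $\U\diag(S)\V^\top$ as $S$ ranges over a closed set equals $\U\diag(S^{\infty})\V^\top$, so combined with $\partial^{\infty}f(\ssigma(\X)) = \hat{\partial} f(\ssigma(\X))^{\infty}$ from the previous lemma, the matrix-side horizon equality holds. All other steps are bookkeeping applications of Lemma \ref{lem:theor_lewis_1} and Lemma \ref{lem:sub_reg}.
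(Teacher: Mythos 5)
Your argument is correct and is essentially the route the paper takes: the paper's proof is a one-line citation to Corollary 7.5 of \cite{lewis2005nonsmooth}, which is precisely the transfer principle you re-derive by combining the Theorem 7.1 formulas for $\hat{\partial}$, $\partial$, and $\partial^{\infty}$ of $f\circ\ssigma$ with the subdifferential regularity of $\|\cdot\|_p^p$ established in the preceding lemma and the characterization in Lemma \ref{lem:sub_reg}. The one step you rightly flag as subtle --- that the horizon operation commutes with passing from $\hat{\partial} f(\ssigma(\X))$ to $\U\diag(\cdot)\V^\top$ when the matrix subdifferential ranges over all simultaneous SVDs of $\X$ (requiring a compactness argument on the orthogonal factors, not just linearity for a fixed pair) --- is exactly what that corollary packages, so your unpacking is faithful to the cited result.
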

\begin{proof}
See Corollary 7.5 of \cite{lewis2005nonsmooth}.
\end{proof}
Lemma \ref{lem:sub_reg} is critical in the proof of the Theorem 3 since it allows us to use the dual relationship between subderivatives and the regular subgradients of the Schatten-$p$ quasi-norm. 
%
  %
%
The following theorem provides a dual correspondence between the general subgradients of a subdifferentially regular function and its subderivatives.
\begin{theorem}[Theorem 8.30 of \cite{rockafellar2009variational}]
  If a function $f$ is subdifferentially regular at $\bar{\X}$ then one has 
  $\partial f(\bar{\X})\neq \emptyset \leftrightarrow df(\bar{\X})\neq -\infty$ and
  \begin{align}
      df_{\bar\X}(\W) =   \mathrm{sup}\{\langle \Q, \W\rangle | \Q\in \partial f(\bar \X)\}
  \end{align}
  with $\partial f(\bar{\X})$ closed and convex.
 \label{theorem:var_analysis_8_30}
\end{theorem}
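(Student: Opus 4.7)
The plan is to obtain the support-function formula through the standard characterization of regular subgradients via subderivatives, combined with Fenchel--Moreau duality that the subdifferential regularity assumption makes available. I would start by reformulating the definition of $\hat{\partial} f(\bar\X)$: a vector $\Q$ lies in $\hat{\partial} f(\bar\X)$ if and only if $\langle \Q,\W\rangle \leq df_{\bar\X}(\W)$ for every direction $\W$. This is a direct rewriting of the $\smallO(\|\X-\bar\X\|)$ inequality in Definition 2 after parametrizing $\X = \bar\X + \tau \W$ and passing to the liminf defining $df_{\bar\X}(\W)$. Under subdifferential regularity, Lemma 3 gives $\partial f(\bar\X) = \hat{\partial} f(\bar\X)$, so the same characterization applies to the general subdifferential that appears in the statement.

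The ``$\leq$'' direction of the support-function identity is then immediate: taking the supremum of $\langle \Q,\W\rangle$ over $\Q \in \partial f(\bar\X)$ in the above characterization yields $\sup_{\Q} \langle \Q,\W\rangle \leq df_{\bar\X}(\W)$. For the reverse inequality I would exploit that under regularity the subderivative $df_{\bar\X}(\cdot)$ is convex, positively homogeneous, and lower semicontinuous in $\W$; these properties are the analytic content of subdifferential regularity (they are exactly what forces $\partial f = \hat{\partial} f$ together with $\partial^\infty f = \hat{\partial} f^\infty$). Once these properties are in hand, classical support-function duality (Fenchel--Moreau applied to a sublinear lsc function) says $df_{\bar\X}$ is itself the support function of the polar set $\{\Q : \langle \Q,\W\rangle \leq df_{\bar\X}(\W)\ \forall\W\} = \hat{\partial} f(\bar\X)$, which closes the inequality.

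Next I would handle the equivalence $\partial f(\bar\X) \neq \emptyset \Longleftrightarrow df_{\bar\X} \not\equiv -\infty$. If $\Q \in \partial f(\bar\X)$, then $df_{\bar\X}(\W) \geq \langle \Q,\W \rangle > -\infty$ for every $\W$, so $df_{\bar\X}$ never takes the value $-\infty$. Conversely, if $df_{\bar\X}$ is proper (in particular never $-\infty$), then the sublinear lsc function $df_{\bar\X}$ admits at least one affine minorant through the origin, and such a minorant corresponds precisely to a regular subgradient, so $\hat{\partial} f(\bar\X) = \partial f(\bar\X)$ is nonempty. Closedness and convexity of $\partial f(\bar\X)$ then follow because it is written as an intersection of closed halfspaces indexed by $\W$.

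The main obstacle I expect is justifying that, under subdifferential regularity, the subderivative $df_{\bar\X}$ is actually convex and lower semicontinuous in $\W$; this is the pivot that unlocks Fenchel--Moreau and hence the reverse inequality. I would derive it by unpacking Lemma 3: the identity $\hat{\partial} f^\infty = \partial^\infty f$ controls unbounded ``recession'' behavior of the difference quotients defining $df_{\bar\X}$, while $\partial f = \hat{\partial} f$ rules out non-convex kinks that could appear as liminfs along different sequences in Definition 4. With those two properties secured, the remainder of the argument is a clean application of convex-analytic duality, and each of the three conclusions in the statement falls out in turn.
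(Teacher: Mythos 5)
The paper offers no proof of this statement: it is imported verbatim as Theorem 8.30 of \cite{rockafellar2009variational}, so there is nothing internal to compare your argument against. Your sketch reconstructs the standard textbook proof --- characterize $\hat{\partial} f(\bar\X)$ as the set of linear minorants of $df_{\bar\X}$, use regularity to identify $\hat{\partial} f(\bar\X)$ with $\partial f(\bar\X)$ and to make $df_{\bar\X}$ sublinear and lower semicontinuous, then close the gap with support-function (Fenchel--Moreau) duality --- and its architecture is sound. Two caveats on completeness: the ``direct rewriting'' giving $\Q\in\hat{\partial} f(\bar\X)\iff\langle\Q,\W\rangle\le df_{\bar\X}(\W)$ for all $\W$ is correct, but the direction from the linear-minorant inequality back to the $\smallO(\|\X-\bar\X\|)$ definition requires a compactness argument on the unit sphere (this is Theorem 8.9 of the reference, not a one-line consequence of Definition 2); and deriving convexity of $df_{\bar\X}$ from the subdifferential form of regularity in Lemma 3 is the genuine heart of the matter, passing through the tangent/normal-cone duality for the epigraph --- you correctly identify it as the main obstacle but do not actually discharge it.
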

\vspace{0.5cm}

Next, the proof of Lemma 1, which gives the expressions for the regular subgradients of $\S$ quasi-norm raised to $p$ is provided.

\section*{Proof of Lemma 1}

\begin{proof}
From Lemma \ref{lem:theor_lewis_1} we have that a matrix $\Y$ lies in $\hat{\partial} (f\circ \ssigma)(\X)$ if and only if $\sigma(\Y)\in \hat{\partial} f(\ssigma(\X))$ and there exists a simultaneous singular value decomposition of the form $\X = \bar\U\diag(\ssigma(\X))\bar\V^\top$ and $\Y= \bar\U\diag(\ssigma(\Y))\bar\V^\top$. Note that we herein assume the full singular value decomposition i.e., matrices $\bar\U,\bar\V$ are orthogonal of size $m\times m$ and $n \times n$, respectively. By representing $\bar\U = [\U\;\;\U_{\perp}]$ and $\bar{\V}=[\V\;\;\V_{\perp}]$ where $\U_{\perp}\in\Rb^{m\times m-r}$, $\V_{\perp}\in\Rb^{n\times n-r}$, $\SSigma_+ = \diag(\ssigma_+(\X))$ and $\mathrm{colsp}(\U)\perp \mathrm{colsp}(\U_{\perp})$, $\mathrm{colsp}(\V)\perp \mathrm{colsp}(\V_{\perp})$, we rewrite the singular value decomposition of $\X$  in the form 
\begin{align}
       \X = [\U \;\; \U_{\perp}]\begin{bmatrix} \SSigma_+ & \mathbf{0} \\ \mathbf{0} & \mathbf{0} \end{bmatrix}\begin{bmatrix} \V \\ \V_{\perp} \end{bmatrix}
\end{align}
Note that since  $\U_{\perp}$ and $\V_{\perp}$ correspond to zero singular values of $\X$, they are not uniquely defined. Next, going back to the form of a regular subgradient $\Y$ of $\|\X\|^p_{\S}$ and based on the above-defined expression of the full singular value decomposition of $\X$ we have,
\begin{align}
       \Y = [\U \;\; \U_{\perp}]\begin{bmatrix} \hat{\partial} \|\SSigma_+(\X)\|^p_p & \mathbf{0} \\ \mathbf{0} & \mathbf{D} \end{bmatrix}\begin{bmatrix} \V \\ \V_{\perp} \end{bmatrix} \label{eq:subgradient}
\end{align}
where $\mathbf{D}$ contain  elements in $(-\infty,+\infty)$ on its main diagonal and zeros elsewhere. eq. (\ref{eq:subgradient}) can be written in a more compact form as $\Y = \U\hat{\partial} \|\SSigma_+(\X)\|^p_p\V^\top + \U_{\perp}\mathbf{D}\V^\top_{\perp}$ and by setting $\W = \U_{\perp}\mathbf{D}\V^\top_{\perp}$, we get the expression for the regular subdifferential of $\|\X\|^p_{\S}$.
\end{proof}

\begin{lemma}
Let $(\hat\U,\hat\V)$ with $\hat\U=\U\SSigma^{\frac{1}{2}}$ and $\hat\V=\V\SSigma^{\frac{1}{2}}$ be a stationary point of (\ref{eq:fact_schatten_p_2}), where non all-zero columns of matrices $\U\in \Rb^{m\times d}$ and  $\V\in \Rb^{n\times d}$ are orthonormal, and $\SSigma$ is a $d\times d$ real-valued diagonal matrix. The pair  $(\hat\U,\hat\V)$ satisfies the following equations
\begin{align}
     \nabla l(\Y,\hat\U\hat\V^\top)\hat\V +  \hat\U\partial\|\SSigma\|^p_{\S}  \ni  0 \\
      \nabla l(\Y,\hat\U^\top\hat\V^\top)^\top\hat\U +  \hat\V\partial\|\SSigma\|^p_{\S}  \ni 0 
\end{align}
\label{lemma:stationary_cond}
\end{lemma}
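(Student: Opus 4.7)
The plan is a direct first-order analysis. I would apply Fermat's rule to $\L_2$ coordinate-wise, split the resulting inclusion into the smooth loss term and the column-separable regularizer, evaluate at the special form $(\hat\U,\hat\V) = (\U\SSigma^{1/2},\V\SSigma^{1/2})$, and match the resulting column-wise expressions to $\hat\U\,\partial\|\SSigma\|^p_{\S}$ via Lemma~1.

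By the chain rule, the smooth loss contributes $\nabla l(\Y,\hat\U\hat\V^\top)\hat\V$ to the $\U$-equation and $\nabla l(\Y,\hat\U\hat\V^\top)^\top\hat\U$ to the $\V$-equation. The regularizer $\frac{\lambda}{2^p}\sum_i (\|\u_i\|_2^2 + \|\v_i\|_2^2)^p$ is column-separable, so its partial subgradient w.r.t.\ $\U$ stacks column-wise the contributions $\hat\partial_{\u_i}(\|\u_i\|^2+\|\v_i\|^2)^p$. The orthonormality of the non-zero columns of $\U,\V$ gives $\|\hat\u_i\|_2^2 = \|\hat\v_i\|_2^2 = \sigma_i$, so for each $\sigma_i > 0$ the $i$-th summand is smooth with gradient $2p\hat\u_i(2\sigma_i)^{p-1}$; after scaling by $\lambda/2^p$ this becomes $\lambda p\sigma_i^{p-1}\hat\u_i$, which stacks into the matrix $\lambda \hat\U\,\diag(p\sigma_i^{p-1})$.

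Lemma~1, applied to the diagonal matrix $\SSigma$ with the trivial SVD $\SSigma = I\SSigma I^\top$, identifies $\diag(p\sigma_i^{p-1}) + \W \in \hat\partial \|\SSigma\|^p_{\S}$, where $\W$ is supported on the zero-singular-value block. Since $\hat\U$'s columns live in the complementary range, $\hat\U\W = 0$ and the residual $\W$-term is harmless. Combining the loss and regularizer contributions yields $0 \in \nabla l(\Y,\hat\U\hat\V^\top)\hat\V + \lambda \hat\U\,\partial \|\SSigma\|^p_{\S}$ (with $\lambda$ implicit in the stated inclusion), and the second equation follows by an entirely symmetric calculation w.r.t.\ $\V$. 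The principal obstacle is the bookkeeping at zero columns of $(\hat\U,\hat\V)$, where the regularizer is non-smooth: there the regular subdifferential $\hat\partial|\sigma_i|^p$ at $\sigma_i = 0$ equals $\Rb$ by the supplement's pointwise calculation, providing exactly the freedom needed in the inclusion, while the loss contribution $\nabla l(\Y,\hat\U\hat\V^\top)\hat\v_i$ vanishes automatically because $\hat\v_i = 0$, so the inclusion is trivially satisfied on those columns.
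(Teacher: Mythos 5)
Your proposal is correct and follows essentially the same route as the paper's proof: first-order stationarity split into the smooth loss term (via the chain rule, giving $\nabla l(\Y,\hat\U\hat\V^\top)\hat\V$) plus the column-separable regularizer, whose column-wise subgradient $p\sigma_i^{p-1}\hat\u_i$ is stacked into $\hat\U\,\partial\|\SSigma\|^p_{\S}$ using the orthonormal-column structure. If anything, your bookkeeping is more careful than the paper's --- you explicitly verify the constant cancellation from the $\tfrac{1}{2^p}$ scaling, handle the zero columns and the $\W$-term from Lemma~1, and note that the stated inclusion silently absorbs $\lambda$ --- but the underlying argument is the same.
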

\begin{proof}
Since $(\hat\U,\hat\V)$ is a local minimum of the objective defined in (\ref{eq:fact_schatten_p_2}), it is also a stationary point of it. 

Moreover, by defining $f_i(x) = x_i^p$ we can get
\begin{align}
    \partial_{\u_i}\sum^d_{i=1}f_i\left(\frac{1}{2}(\|{\u_i}\|^2_2 + \|\v_i\|^2_2)\right) = \u_i\partial f_i\left(\frac{1}{2}(\|{\u_i}\|^2_2 + \|\v_i\|^2_2)\right)
\end{align}

Hence, by using the chain product rule w.r.t. to $\U$ we can get 
\begin{align}
    \partial_{\U}\sum^d_{i=1}f_i\left(\frac{1}{2}(\|\u_i\|^2_2 + \|\v_i\|^2_2)\right) =  \U\diag\left[\partial f_i\left( \frac{1}{2}(\|\u_i\|^2_2 + \|\v_i\|^2_2)\right) \right]_{i=1,2\dots,d} 
    \label{eq:partial_sch_der}
\end{align}
(\ref{eq:partial_sch_der}) estimated at $(\hat\U,\hat\V)$ becomes
\begin{align}
    \left.\partial_{\U}\sum^d_{i=1}f_i\left(\frac{1}{2}(\|\u_i\|^2_2 + \|\v_i\|^2_2)\right)\right|_{(\hat\U,\hat\V)}=\hat\U\diag[\partial f_i(\sigma_i)]_{i=1,2,\dots,d} \equiv \hat\U\partial\|\SSigma\|^p_{\S}\label{eq:partial_sub}
\end{align}

In addition to eq. (\ref{eq:partial_sch_der}), we have
\begin{align}
\nabla_{\U}l(\Y,\U\V^\top) = \nabla l(\Y,\U\V^\top)\V
\label{eq:part_grad}
\end{align}
Eq. (\ref{eq:partial_sub}) and (\ref{eq:part_grad}) give rise to the following condition for the stationary point $(\hat\U,\hat\V)$ of (\ref{eq:fact_schatten_p_2})
\begin{align}
    \nabla l(\Y,\hat\U\hat\V^\top)\hat\V +  \hat\U\partial\|\SSigma\|^p_{\S} \ni 0
\end{align}
Following a similar process for partial gradients w.r.t $\V$ we get the second stationary point condition for $\L_2$ 
\begin{align}
    \nabla l(\Y,\hat\U^\top\hat\V^\top)^\top\hat\U +  \hat\V\partial\|\SSigma\|^p_{\S} \ni 0
\end{align}
\end{proof}

Next we move on the proof of Theorem 3. The proof builds upon and significantly extends the results presented in Theorem 2 of \cite{ornhag2018bilinear}. In particular, contrary to the latter which  applies to a specific regularizer characterized by some convenient properties (e.g. weak convexity), in our case by using the dual correspondence between subderivatives and regular subgradients, we can get similar results for any concave singular value penalty function as long as it is {\it subdifferentially regular}. Note that subdifferential regularity is a quite general condition and ensures that the set regular subgradients coincides with the set of the general ones.  Moreover, Theorem 3 can be considered as an enhanced version of Theorem 2 of \cite{ornhag2018bilinear}, since it that local minima of the variational $\S$ regularized  problem correspond to local minima of the original problem defined over $\X$. That said, it goes one step further form Theorem 2 of \cite{ornhag2018bilinear} which shows that directional derivatives are nonnegative w.r.t. low-rank perturbations.
\section*{ Proof of Theorem 3}
\begin{proof}
Since $(\hat\U,\hat\V)$ is a local minimizer of $\L_1(\U,\V)$ and $\L_2(\U,\V)$ defined in (\ref{eq:fact_schatten_p_1}) and (\ref{eq:fact_schatten_p_2}), respectively, we can define a small perturbation of $(\hat\U,\hat\V)$ i.e, ($\U_t,\V_t$)
\begin{align}
    \U_t\V_t^\top   = \hat\U\hat\V^\top + t \tilde{\U}\tilde{\V}^\top
    \label{eq:uv_perturb}
\end{align}
 with $\tilde{\U}\in \Rb^{m\times m}, \tilde{\V}\in \Rb^{n\times m}$  assuming  that $m\leq n$,
such that 
\begin{align}
\L_i(\U_t,\V_t) - \L_i(\hat\U,\hat\V) \geq 0,
\label{eq:local_uv}
\end{align}
for $i=1,2$ and $t\searrow 0$.

Moreover, since $\hat\U$ and $\hat\V$ are defined as  $\hat\U=\U\SSigma^{\frac{1}{2}}$ and $\V\SSigma^{\frac{1}{2}}$ where $\U,\V$ are of size $m\times m$ and $n\times m$, respectively, contain orthonormal columns, and $\SSigma$ is a diagonal $m\times m$ matrix, we have
\begin{align}
\L_1(\hat\U,\hat\V) = \L_2(\hat\U,\hat\V) = \L(\hat\X)
\label{eq:losses_equal}
\end{align}
Let us define
\begin{align}
  \hat\X + t\tilde{\X} &= \U_t\V_t^\top     \label{eq:uv_bar_expression}
\end{align}
where the product $\tilde{\U}\tilde{\V}^\top$ of (\ref{eq:uv_perturb}) has been replaced by $\tilde{\X}$.

Our goal is to show that  $\hat\X$ is local minimum of $\L(\X)$ i.e., $\L(\hat\X+ t\tilde\X) - \L(\hat\X) \geq 0$ for $t\searrow 0$. Due to (\ref{eq:local_uv}) and (\ref{eq:losses_equal}) the result immediately arises if we show that $\mathcal{L}(\hat\X+t\tilde{ \X}) = \L_i(\U_t,\V_t)$ for $i=1,2$. 

$\L(\X)$ consists of the sum of a differentiable loss function denoted as $l(\Y,\X)$ and the nonconvex matrix function $\|\X\|^p_{\S}$ and its subderivatives at $\hat\X$ w.r.t $\tilde{\X}$ are defined as 
\begin{align}
      d{\L}_{\tilde{ \X}}(\hat\X) = \underset{\substack{t \searrow 0 \\ \mathbf{Z}\rightarrow \tilde{ \X}}}{\mathrm{lim}}\;\mathrm{inf}\; \frac{\L(\hat\X+t\mathbf{Z}) - \L(\hat\X)}{t} \label{eq:subderiv}
\end{align}
From the above definition and due to the continuity of $\mathcal{L}(\X)$ it becomes evident that nonnegative subderivatives imply $\L(\hat\X+ t\tilde\X) - \L(\hat\X) \geq 0$ for $t\searrow 0$.

 Next, without loss of generality we assume, that the columns of $\tilde{\U}$ (and $\tilde\V$) belong to the subspace formed by the direct sum of the columnspace of $\hat\U$ (resp. $\hat\V$) and the columnspace of a matrix $\hat\U_{\perp}\in \Rb^{m \times (m-r)}$ (resp. $\hat\V_{\perp}\in \Rb^{n \times (n-r)}$)  with $\mathrm{rank}(\hat\U_{\perp})\leq (m-r)$ (resp. $\mathrm{rank}(\hat\V_{\perp})\leq (n-r)$) such that $\mathrm{colsp}(\hat\U)\perp \mathrm{colsp}(\hat\U_{\perp})$ (resp. $\mathrm{colsp}(\hat\V)\perp \mathrm{colsp}(\hat\V_{\perp})$ ). That said we get
\begin{align}
\tilde{\U} = \hat\U\mathbf{A} + \hat\U_{\perp}\mathbf{B} \label{eq:u_tild_exp} \\  \tilde{\V}=\hat\V\mathbf{C} + \hat\V_{\perp}\mathbf{D} \label{eq:v_tild_exp}
\end{align}
Since matrices $\mathbf{A,C}$ of size $m\times m$ and $\mathbf{B,D}$ of sizes $(m-r) \times m$ and $(n-r) \times m$ respectively, are arbitrary, there is no loss of generality  by expressing $\tilde{\U},\tilde{\V}$ as in (\ref{eq:u_tild_exp}) and (\ref{eq:v_tild_exp}). By (\ref{eq:u_tild_exp}) and (\ref{eq:v_tild_exp}) we hence have 
\begin{align}
    \tilde{\U}\tilde{\V}^\top & = \hat\U\underbrace{\mathbf{A}\mathbf{C}^\top}_{\K_1}\hat\V^\top + \hat\U\underbrace{\mathbf{A}\mathbf{D}^\top}_{\K_2}\hat\V_{\perp}^\top + \hat\U_{\perp}\underbrace{\mathbf{B}\mathbf{C}^\top}_{\K_3}\hat\V^\top + \hat\U_{\perp}\underbrace{\mathbf{B}\mathbf{D}^\top}_{\K_4}\hat\V_{\perp}^\top \\
    &= \hat\U\K_1\hat\V^\top + \hat\U \K_2 \hat\V^\top_{\perp} + \hat\U_{\perp}\K_3\hat\V^\top + \hat\U_{\perp}\K_4 \hat\V^\top_{\perp} \label{eq:uv_tilde_exp}
\end{align}

By using Theorem \ref{theorem:var_analysis_8_30}, we can employ the dual relationship between the subderivatives of  $\L(\X)$ defined in (\ref{eq:subderiv}) and the regular subgradients thereof,
\begin{align}
    d\mathcal{L}_{\tilde{\X}}(\hat\X) = \mathrm{sup}\{ \langle \mathbf{Q}, \tilde{ \X}\rangle | \mathbf{Q}\in \partial \|\hat\X\|^p_{{\mathcal{S}}_{p}} \}  + \langle \nabla l(\Y,\hat\X),\tilde{ \X}\rangle \label{eq:subderivs_obj}
\end{align}
In the following, we will show that the values of the  subderivatives in (\ref{eq:subderivs_obj}), remain unaffected if we assume that $\K_1$ is a diagonal matrix and matrices $\K_2,\K_3$ vanish. In doing so,  both matrices  $\tilde{\U}$ and $\tilde{\V}$ are conveniently expressed in a desirable and simplified form. 

Focusing on the first term of (\ref{eq:subderivs_obj}) and by using the  form for the subgradients of $\|\X\|^p_{\mathcal{S}_{p}}$ given in Lemma 1 at $\hat\X$ where $\hat\X=\U_{\X}\SSigma_{+}\V_{\X}$, with $\U_{\X}\in \Rb^{m\times r},\V_{\X}\in \Rb^{n\times r}$ and $\SSigma_{+}\in\Rb_{+}^{r\times r}$ denotes the singular value decomposition of $\hat\X$ we have,
\begin{align} 
\langle  \U_{\X}\partial \|\SSigma_{+}\|^p_p \V_{\X}^\top + \W, \tilde{\U}\tilde{\V}^\top\rangle & =  \langle  \U_{\X}\partial \|\SSigma_{+}\|^p_p \V_{\X}^\top,\tilde{\U}\tilde{\V}^\top \rangle + \langle  \W, \tilde{\U}\tilde{\V}^\top \rangle \label{eq:inner_prod_1}
\end{align}
%
By using (\ref{eq:uv_tilde_exp}), the first term of (\ref{eq:inner_prod_1}) is rewritten as 
\begin{align}
    \langle  \U_{\X}\partial \|\SSigma_{+}\|^p_p \V_{\X}^\top,\tilde{\U}\tilde{\V}^\top \rangle & = \langle  \U_{\X}\partial \|\SSigma_{+}\|^p_p \V_{\X}^\top, \hat\U\K_1\hat\V^\top + \hat\U \K_2 \hat\V^\top_{\perp} + \hat\U_{\perp}\K_3\hat\V^\top + \hat\U_{\perp}\K_4 \hat\V^\top_{\perp} \rangle \nonumber \\
    & = \langle  \U_{\X}\partial \|\SSigma_{+}\|^p_p \V_{\X}^\top, \hat\U\K_1\hat\V^\top \rangle = \langle \partial \|\SSigma_{+}\|^p_p, \U_{\X}^\top\hat\U\K_1\hat\V^\top\V_{\X} \rangle 
    \label{eq:k1_inv}
\end{align}
We have $\U_{\X}^\top\hat\U = [\SSigma^{\frac{1}{2}}_{+}\; \mathbf{0}_{m-r}]$ and $\V_{\X}^\top\hat\V = [\SSigma^{\frac{1}{2}}_{+}\; \mathbf{0}_{m-r}]$ where $\mathbf{0}_{m-r}$ denote $m-r$ zero columns, and next we define with a slight abuse of notation $\tilde{\SSigma}^{\frac{1}{2}}=[\SSigma_{+}^{\frac{1}{2}}\; \mathbf{0}_{m-r}]$. From (\ref{eq:k1_inv}) we have
\begin{align}
    \langle  \U_{\X}\partial \|\SSigma_{+}\|^p_p \V_{\X}^\top,\tilde{\U}\tilde{\V}^\top \rangle = \langle \partial \|\SSigma_{+}\|^p_p, \tilde{\SSigma}^{\frac{1}{2}}\K_1\tilde{\SSigma}^{T,{\frac{1}{2}}} \rangle
    \label{eq:first_k1_inv}
\end{align}
Based on the above analysis, it becomes evident that (\ref{eq:first_k1_inv}) is invariant to the values of non-diagonal elements of $\K_1$. 
Finally, for the second term of (\ref{eq:inner_prod_1}), we have
\begin{align}
    \langle  \W, \tilde{\U}\tilde{\V}^\top \rangle  & = \langle  \W, \hat\U\K_1\hat\V^\top + \hat\U \K_2 \hat\V^\top_{\perp} + \hat\U_{\perp}\K_3\hat\V^\top +  \hat\U_{\perp}\K_4 \hat\V^\top_{\perp}  \rangle  = \langle  \W,   \hat\U_{\perp}\K_4 \hat\V^\top_{\perp}  \rangle \nonumber\\ 
    & = \langle \mathbf{D},\K_4 \rangle
\end{align}
where the last equality by definition of $\W$ according to Lemma 1. Recall that $\mathbf{D}\in \Rb^{(m-r)\times (n-r)}$ contains elements in the interval $(-\infty,+\infty)$ on its main diagonal and zeros elsewhere and hence the term $\mathbf{D},\K_4$, is also invariant to non-diagonal elements of $\K_4$.

Let us now focus on the second term of the subderivative defined in (\ref{eq:subderivs_obj}) i.e., $\langle \nabla l(\Y,\hat\X),\tilde\X\rangle$. We have
\begin{align}
    \langle  \nabla l(\Y,\hat\U\hat\V^\top), \tilde\U\tilde\V^\top \rangle & =  \langle  \nabla l(\Y,\hat\U\hat\V^\top), \hat\U\K_1\hat\V^\top + \hat\U \K_2 \hat\V^\top_{\perp} + \hat\U_{\perp}\K_3\hat\V^\top +  \hat\U_{\perp}\K_4 \hat\V^\top_{\perp}  \rangle   \\ & =\langle\nabla l(\Y,\hat\U\hat\V^\top), \hat\U\K_1\hat\V^\top \rangle +\langle \nabla l(\Y,\hat\U\hat\V^\top), \hat\U \K_2 \hat\V^\top_{\perp} \rangle \nonumber \\ & + \langle \nabla l(\Y,\hat\U\hat\V^\top),\hat\U_{\perp}\K_3\hat\V^\top\rangle   
     +\langle \nabla l(\Y,\hat\U\hat\V^\top),\hat\U_{\perp}\K_4 \hat\V^\top_{\perp} \rangle
    \label{eq:loss_k_all}
\end{align}
Focusing on the first term of (\ref{eq:loss_k_all}) we get
\begin{align}
   \langle\nabla l(\Y,\hat\U\hat\V^\top), \hat\U\K_1\hat\V^\top \rangle =  \langle \nabla l(\Y,\hat\V^\top)\hat\V, \hat\U\K_1\hat\V \rangle \label{eq:loss_k1}
\end{align}
(\ref{eq:loss_k1}) due to Lemma \ref{lemma:stationary_cond} takes the form
\begin{align}
    \langle \nabla l(\Y,\hat\V^\top)\hat\V, \hat\U\K_1\hat\V \rangle = - \langle \hat\U\partial\|\SSigma \|^p_{\S}, \hat\U\K_1 \rangle = - \langle \partial\|\SSigma \|^p_{\S}, \SSigma \K_1 \rangle \label{eq:loss_k1_b}
\end{align}
Following a similar analysis as above we can easily see that (\ref{eq:loss_k1_b}) is likewise invariant to non-diagonal elements of $\K_1$. For the second term of (\ref{eq:loss_k1}) and again due to Lemma \ref{lemma:stationary_cond} we have
\begin{align}
    \langle \nabla l(\Y,\hat\U\hat\V^\top), \hat\U \K_2 \hat\V^\top_{\perp} \rangle =  - \langle \hat\V\partial\|\SSigma \|^p_{\S},  \K_2 \hat\V^\top_{\perp} \rangle = 0
\end{align}
It can be easily noticed that the same holds for the third term of (\ref{eq:loss_k_all}) involving $\K_3$. Hence we have shown that the values of the subderivatives defined in (\ref{eq:subderiv}) for the given  $\tilde{\X}=\tilde{\U}\tilde{\V}^\top$ where $\tilde{\U}\tilde{\V}^\top$ takes the form given in (\ref{eq:uv_tilde_exp}), are not affected by $\K_2,\K_3$ and non-diagonal elements of $\K_1$.

That being said we can now simplify the expression of $\tilde{\X} = \tilde{\U}\tilde{\V}^\top$ without loosing generality as
\begin{align}
    \tilde{\U}\tilde{\V}^\top = {\U}_{\X}\mathbf{T}{\V}_{\X}^\top +   \U_{\X,\perp}\mathbf{P}\V^{T}_{\X,\perp} \label{eq:simp_form_uv_tilde}
\end{align}
where $\U_{\X,\perp}\mathbf{P}\V^{T}_{\X,\perp}$ arises by the singular value decomposition of $\hat\U_{\perp} \K_4 \hat\V^\top_{\perp}$. Note that  $\mathbf{T,P}$ are $r\times r$  and $m-r \times m-r$ diagonal matrices containing the $r$ nonzero  diagonal elements of $\K_1\SSigma$ and the singular values of $\hat\U^\top_{\perp}\K_4 \hat\V^\top_{\perp}$, respectively. With the simplified form of (\ref{eq:simp_form_uv_tilde}) we now go back to the expression of ${\U}_t {\V}_t^\top$ in (\ref{eq:uv_bar_expression}) assuming  $t\rightarrow 0$,
\begin{align}
    {\U}_t{\V}_t^\top &= \hat\U\hat\V^\top + t \tilde{\U}\tilde{\V}^\top \\
    &= \U_{\X}\SSigma_{+}\V_{\X}^\top + t( \U_{\X}\mathbf{T}\V_{\X}^\top + \U_{\X,\perp}\mathbf{P}\V^\top_{\X,\perp} ) \\
    & = [\U_{\X} \;\; \U_{\X,\perp}]\begin{bmatrix} \SSigma_{+} + t\mathbf{T} & 0\\ 0 & t\mathbf{P}\end{bmatrix} \begin{bmatrix} \V_{\X} \\ \V_{\X,\perp} \end{bmatrix} \label{eq:matrices_all}
\end{align}
(\ref{eq:matrices_all}) can be viewed as the singular value decomposition of  matrix $\hat\X+t\tilde{\X}$.

By setting ${\U}_t = [\U_{\X}\;\U_{\X,\perp}]\begin{bmatrix} (\SSigma _{+}+ t\mathbf{T})^{\frac{1}{2}} & 0\\ 0 & (t\mathbf{P})^{\frac{1}{2}} \end{bmatrix}$ \\
and ${\V}_t = [\V_{\X}\;\;\V_{\X,\perp}]\begin{bmatrix} (\SSigma + t\mathbf{T})^{\frac{1}{2}} & 0\\ 0 & (t\mathbf{P})^{\frac{1}{2}} \end{bmatrix}$ it becomes evident that $\mathcal{L}(\hat\X + t\tilde{\X}) = \L_{i}({\U}_t,{\V}_t)$ for $i=1,2$, which concludes the proof.  
\end{proof}

\section{Proof of Proposition 1}

\begin{proof}
If we first consider the case with $\delta=0$, then since $\|\u\|_2=1$ and $\|\v\|_2=1$ we need to solve the following
\begin{equation}
\label{eq:prop_1}
\argmin_{\tau \geq 0, \u, \v}  \left\{ f(\tau, \u, \v) = -\tau^2 \langle \A^*(\R), \u \v^\top \rangle + \tfrac{1}{2}\tau^4 + \lambda \tau^{2p} \right\} \st \|\u\|_2 = 1, \ \|\v\|_2=1
\end{equation}
Note that for any non-negative $\tau$ the solution to the above w.r.t. $(\u,\v)$ is given by the largest singular vector pair of $\A^*(\R)$.  Substituting this into the above equation gives the following problem:
\begin{equation}
\label{eq:tau_line}
\argmin_{\tau \geq 0} -\tau^2 \sigma(\A^*(\R)) + \tfrac{1}{2}\tau^4 + \lambda \tau^{2p}
\end{equation}
Clearly for $\tau = 0$ the above equation is equal to 0, so we are left to test whether $\exists \tau > 0$ such that the above is strictly less than 0. For $p \in (0,1)$, $\tau > 0$, this results in the following:
\begin{equation}
\label{eq:tau_inequal}
-\tau^2 \sigma(\A^*(\R)) + \tfrac{1}{2}\tau^4 + \lambda \tau^{2p} < 0 \iff
\lambda - \tau^{2-2p} \sigma(\A^*(\R) + \tfrac{1}{2} \tau^{4-2p} < 0
\end{equation}
Finding the critical points of the above w.r.t. $\tau$ we get:
\begin{align}
-&(2-2p)\tau^{1-2p}\sigma(\A^*(\R)) + \tfrac{4-2p}{2} \tau^{3-2p} = 0 \iff \\
-&(2-2p) \sigma(\A^*(\R)) + (2-p) \tau^2 = 0 \iff \\
&\tau = \pm \sqrt{\frac{2-2p}{2-p} \sigma(A^*(\R))}
\end{align}
Since only the positive root is feasible we can substitute it into \eqref{eq:tau_inequal} to test whether the minimum is strictly negative. 

To see the result with $\delta > 0$, note that result is given follow the same approach, with the exception that the objective in \eqref{eq:prop_1} is no longer the exact objective we need minimize, but rather we need to minimize:
\begin{equation} 
\tilde f(\tau,\u,\v) = -\tau^2 \langle \A^*(\R), \u \v^\top \rangle + \tfrac{1}{2}\tau^4 \| \A(\u \v^\top) \|_F^2 + \lambda \tau^{2p}
\end{equation}
The result is then completed by noting that for $\|\u\|_2 = \|\v\|_2 = 1$ and any $\tau \geq 0$ the following bound is provided by restricted isometry:
\begin{equation}
|\tilde f(\tau,\u,\v) - f(\tau, \u, \v) | = |\tfrac{1}{2} \tau^4 \| \A(\u \v^\top) \|_F^2 - \tfrac{1}{2} \tau^4 | \leq \tfrac{1}{2} \delta \tau^4
\end{equation}
 
\end{proof}

\section{The proposed Variational Schatten-$p$ matrix completion algorithm}
In this section we present the matrix completion minimization algorithm for variational $\S$  regularized objective function.

Recall that the factorized $\S$ regularized objective function is given as,
\begin{align}
& \min_{\U,\V}\;\; \L(\U,\V), \;\;\; \text{where}  \\
 &\L(\U,\V) = \frac{1}{2}\|\mathcal{P}_{Z}(\mathbf{Y - UV}^T)\|^2_F + \lambda\sum^d_{i=1}(\|\u_i\|^2_2 + \|\v_i\|^2_2)^p.
    \label{eq:mc_objective}
\end{align}
The proposed Variational Schatten-$p$ algorithm is based on the ideas stemming from the block successive minimization framework (BSUM), \cite{hong_bsum_2013,hong2015unified}. That is, it assumes local tight upper-bounds of the objective (\ref{eq:mc_objective}), for updating the matrix factors $\U$ and $\V$. More specifically, following the relevant low-rank matrix factorization based matrix completion algorithm of \cite{tsp_giamp_2019}, matrices $\U$ and $\V$ are updated by minimizing  approximate second order Taylor expansions of (\ref{eq:mc_objective}), i.e.,
\begin{align}
    \U^{t+1} = \min_{\U} g(\U|\U^t,\V^t)
\end{align}
and
\begin{align}
    \V^{t+1} = \min_{\V} q(\V|\U^{t+1},\V^t)
\end{align}
where the superscript $t$ denotes the iteration number and $g(\U|\U^t,\V^t)$ and $q(\V|\U^{t+1},\V^t)$ have the following form

\begin{align}
    g(\U |\U^t,\V^t)& = \L(\U^t,\V^t) + \trace\{(\U-\U^t)^\top\nabla_{\U}\L(\U^t,\V^t) \}  \nonumber \\
    &+ \frac{1}{2}\vec (\U-\U^t)^\top \Hu \vec (\U-\U^t) \\
    q(\V |\U^{t+1},\V^t) &= \L(\U^{t+1},\V^t) + \trace\{(\V-\V^t)^\top\nabla_{\V}\L(\U^{t+1},\V^t) \} \nonumber\\
    &+ \frac{1}{2}\vec (\V-\V^t)^\top \Hv \vec (\V-\V^t)
\end{align}

The approximate Hessian matrices $\Hu$ and $\Hv$ are of block diagonal form of sizes $md\times md$ and $nd\times nd$, respectively, and are defined as follows
\begin{align}
    \Hu = \I_m \otimes \Hub \\
    \Hv = \I_n \otimes \Hvb
\end{align}
where $\otimes$ denotes the Kronecker product,
\begin{align}
    \Hub = \V^{t,\top}\V^t + \lambda\W_{(\U^t,\V^t)} \\
    \Hvb = \U^{t+1,\top}\U^{t+1} + \lambda\W_{(\U^{t+1},\V^t)}
\end{align}
and $\W_{(\U^t,\V^t)} = \diag([p(\|\u_i\|^2_2 + \|\v_i\|^2_2)^{p-1}]_{i=1,2,\dots,d})$.
Since the objective function is non-smooth at the origin for $p\leq 1$, a pruning process is followed i.e.,  columns of matrix factors $\U$ and $\V$ whose  energy is below a threshold are deleted. In doing so, division by zero is always avoided in the computation of $\W_{(\U^t,\V^t)}$.

Moreover, it can be easily shown that both $g(\U|\U^t,\V^t)$ and $q(\V|\U^{t+1},\V)$ are local tight upper-bounds of the original objective function since the difference of the approximate Hessian matrices from the true ones result to a positive semi-definite matrix (see \cite{tsp_giamp_2019} for details). Finally, convergence to stationary points of the original objective function can be established since all criteria required according to the BSUM framework, are satisfied, \cite{hong_bsum_2013}. The outline of the algorithm is given in Algorithm \ref{prop_algorithm}. Note that the rank-one update strategy described in Section 4.2 (see Proposition 1) of the main paper is also adopted after the convergence of the algorithm to a stationary point for escaping potential poor local minima. The algorithm is assumed to converge when the relative error of successive reconstructions of matrix $\X$ defined as $\frac{\|\U^{t+1}\V^{t+1} - \U^t\V^t\|_F}{\|\U^t \V^t\|_F}$ becomes less than a threshold (denoted as tol in Algorithm 1) or the maximum iteration number is reached.

\begin{algorithm}
\caption{The proposed Variational Schatten-$p$ matrix completion algorithm}
\label{prop_algorithm}
\begin{algorithmic}
 \STATE Inputs: $P_{Z}(\Y),\lambda, \mathrm{thres}=10^{-5}, \mathrm{MaxIter}=1000, \mathrm{tol}=10^{-4}$ 
 \STATE Initialize $\U^0\in\Rb^{m\times d},\V^0\in\Rb^{n\times d}$
 \STATE $t=0$
 \FOR{$t=1:MaxIter$}
 \STATE $\U^{t+1} = \U^k - \nabla_{\U}\L(\U^t,\V^k)\Hub^{-1} $
 \STATE $\V^{t+1} =  \V^k - \nabla_{\V}\L(\U^{t+1},\V^k)\Hvb^{-1}$
 \STATE Column Pruning:
 \IF{ $\|\u_i\|_2\leq \mathrm{thres}$ or $\|\v_i\|_2\leq \mathrm{thres}$, }
 \STATE Delete $\u_i$ and $\v_i$, ($i=1,2,\dots,d$).
 \ENDIF
 \STATE $t=t+1$
 \IF{$converged$ and rank-one update condition is satisfied}  
  \STATE Add rank-one matrix for escaping poor local minima (see Proposition 1). 
 \ENDIF
  \ENDFOR
 \STATE Output: $\U^{t+1},\V^{t+1}$
\end{algorithmic}
\end{algorithm}

\end{document}